\setlist[enumerate]{itemsep=2pt,parsep=2pt,before={\parskip=2pt}}
\numberwithin{equation}{section}
\newtheorem{Thm}{Theorem}[section]
\newtheorem{Prop}[Thm]{Proposition}
\newtheorem{Lem}[Thm]{Lemma}
\newtheorem{Cor}[Thm]{Corollary}
\theoremstyle{definition}
\newtheorem{Expl}[Thm]{Remark}
\newtheorem{Ex}[Thm]{Example}
\newtheorem{Def}[Thm]{Definition}
\renewcommand{\phi}{\varphi}
\renewcommand{\rho}{\varrho}
\newcommand{\Q}{\mathbb{Q}}
\newcommand{\N}{\mathbb{N}}
\newcommand{\CAT}{\operatorname{CAT}}
\newcommand{\diam}{\operatorname{diam}}
\newcommand{\norm}[1]{\|#1\|}
\newcommand{\abs}[1]{\lvert#1\rvert}
\DeclareMathOperator{\Con}{Con}
\DeclareMathOperator{\R}{\mathbb{R}}
\DeclareMathOperator{\Lip}{Lip}
\DeclareMathOperator\Iso{Isom}
\DeclareMathOperator\CBI{CB}
\DeclareMathOperator\RCBI{RCB}
\DeclareMathOperator{\conv}{conv}
\DeclareMathOperator{\spt}{spt}
\title{Extending and improving conical bicombings}
\author{Giuliano Basso}
\address{Department of Mathematics, University of Fribourg, Chemin du Mus\'ee 23, 1700 Fribourg, Switzerland
\&
Max Planck Institute for Mathematics,
Vivatsgasse 7,
53111 Bonn,
Germany}
\email{basso@mpim-bonn.mpg.de}
\keywords{injective metric space, conical bicombing, Z-structure}
\subjclass[2020]{Primary 53C23; Secondary 20F65 and 51F99}
\begin{document}

\pdfbookmark[0]{Extending and improving conical bicombings}{titleLabel}

\begin{abstract}
We study metric spaces that admit a conical bicombing and thus obey a weak form of non-positive curvature. Prime examples of such spaces are injective metric spaces. In this article we give a complete characterization of complete metric spaces admitting a conical bicombing by showing that every such space is isometric to a \(\sigma\)-convex subset of some injective metric space.
In addition, we show that every proper metric space that admits a conical bicombing also admits a consistent bicombing that satisfies certain convexity conditions. This can be seen as a strong indication that a question from Descombes and Lang about improving conical bicombings might have a positive answer. As an application, we prove that any group acting geometrically on a proper metric space with a conical bicombing admits a \(\mathcal{Z}\)-structure.
\end{abstract}

\maketitle

\section{Introduction}
\subsection{Extending conical bicombings}

A metric space \(X\) is called \textit{injective} if it is an injective object in the category of metric spaces with \(1\)-Lipschitz maps as morphisms. More concretely, \(X\) is said to be injective if for any metric space \(B\), every 1-Lipschitz map \(f\colon A\to X\), \(A\subset B\), can be extended to a 1-Lipschitz map \(\bar{f}\colon B\to X\).  Injective metric spaces, also called hyperconvex metric spaces by some authors, were first studied by Aronszajn and Panitchpakdi in \cite{MR84762} and have since been applied in fields as diverse as functional analysis, geometric group theory, metric fixed point theory and phylogentic analysis. Particular examples of injective spaces are the real line, complete metric \(\R\)-trees and finite \(\CAT(0)\) cube complexes endowed with the length metric which is induced by choosing the \(\ell_\infty\)-norm on each cube (see \cite{MR695270} and also \cite{MR4043820, MR1452832} for related results).  Further examples are the Banach space \(\ell_\infty\) of bounded sequences equipped with the supremum norm, closed geodesically convex subsets of \(\ell_\infty\), and, as shown in \cite{MR3682664}, certain subsets of \(\ell_\infty\) that lie between graphs of \(1\)-Lipschitz functions. In contrast to these examples, however, a smooth Riemannian manifold is injective if and only if it is isometric to the real line. 

As observed by Lang in \cite{MR3096307}, injective metric spaces have striking properties reminiscent of non-positive curvature. In particular, on every injective metric space \(X\) there exist certain distinguished geodesics which satisfy a weak global non-positive curvature condition. More precisely, there exists a map \(\sigma\colon X\times X\times [0,1]\to X\) subject to the following conditions. The curve \(\sigma_{xy}\coloneqq \sigma(x,y, \cdot)\) is a constant speed geodesic from \(x\) to \(y\) and
\begin{equation}\label{eq:conicalDEF}
d(\sigma_{xy}(t), \sigma_{x'y'}(t)) \leq (1-t)\,d(x,x') +t\, d(y,y')
\end{equation}
for all \(x,y,x',y'\in X\) and all \(t\in [0,1]\). Following Lang, we call such a map \(\sigma\) a \textit{conical bicombing}. Recently, conical bicombings have become a useful tool in geometric group theory in connection with Helly groups (see \cite{cavallucci2021discrete}, \cite{chalopin2020helly}, \cite{haettel2021lattices}, \cite{haettel2020coarse}, \cite{MR4285138}) and in metric fixed point theory where various fixed point results which hold for convex subsets in Banach spaces have been transferred to spaces admitting a conical bicombing (see \cite{itoh1979some}, \cite{MR2578604}, \cite{MR1072312}). In the present article we continue with the study of conical bicombings which was initiated in \cite{MR3940917}, \cite{MR3370039}, \cite{MR3483604}, \cite{MR3777137}.

The class  \(\Con\) of all metric spaces admitting a conical bicombing enjoys many desirable structural properties. For example, it is closed under ultralimits, \(\ell_p\)-products, for \(p\in [1, \infty]\), and \(1\)-Lipschitz retractions. Let \(X\) be a member of \(\Con\). We say that \(A\subset X\) is \(\sigma\)-convex if there exists a conical bicombing \(\sigma\) on \(X\) such that for all \(x\), \(y\in A\) the geodesic \(\sigma_{xy}(\cdot)\) is contained in \(A\). Clearly, every \(\sigma\)-convex subset of \(X\) also belongs to \(\Con\). As alluded to above, injective spaces admit conical bicombings, and thus closed \(\sigma\)-convex subsets of injective spaces are examples of metric spaces admitting a conical bicombing. Our first result shows that these examples completely characterize the class of complete metric spaces that admit a conical bicombing.

\begin{Thm}\label{thm:main-1-neu}
Let \(X\) be a complete metric space. Then the following statements are equivalent:
\begin{enumerate}
\item \(X\) admits a conical bicombing.
\item \(X\) is isometric to a \(\sigma\)-convex subset of some injective metric space. 
\end{enumerate}
\end{Thm}

The main tools used to prove Theorem~\ref{thm:main-1-neu} are the \(1\)-Wasserstein distances from optimal transport theory and metric injective hulls (also known as tight-spans). All relevant material concerning \(1\)-Wasserstein distances can be found in Section~\ref{sec:section-two}. We continue with a short discussion of injective hulls. We follow \cite[Definition 9.12]{MR2240597} and call an isometric embedding \(i\colon X\to Y\) \textit{essential} provided that a \(1\)-Lipschitz map \(j\colon Y\to Z\) to any metric space \(Z\) is an isometric embedding, whenever \(j\circ i\) is an isometric embedding. A remarkable result of Isbell \cite{MR184061} states that every metric space \(X\) has an essentially unique injective hull \((E(X), i)\). By definition,  \(E(X)\) is an injective metric space and \(i\colon X\to E(X)\) an essential isometric embedding. For other equivalent descriptions of the injective hull we refer to \cite[Proposition 9.20]{MR2240597}. The existence of injective hulls has been rediscovered several times (see \cite{MR1258238}, \cite{MR753872}, \cite{MR0196709}). 

We refer to \cite[pp. 334 -- 339]{MR753872} for some pictures of the injective hulls of \(n\)-point metric spaces for small \(n\). It turns out that the injective hull of a finite metric space is always isometric to a finite polyhedral complex whose cells are subsets of \(\ell_\infty^d=(\R^d, \norm{\cdot}_\infty)\), where \(d\) is the greatest integer such that \(2d \leq \# X\). Moreover, injective hulls of \(0\)-hyperbolic spaces are metric \(\R\)-trees (see \cite[Theorem 8]{MR753872}) and in \cite{MR3096307} it is shown that the injective hulls of many interesting locally finite graphs are locally finite polyhedral complexes which have only finitely many isometry types of cells. We also note that injective hulls are increasingly used as a tool in geometric group theory. For example, they are used by Chalopin, Chepoi, Genevois, Hirai and Osajda, to show that every Helly group admits a geometric group action on an injective metric space (see \cite[Theorem 1.5]{chalopin2020helly}).

By construction, the injective hull \(E(X)\) is the `smallest' injective metric space containing \(X\). Indeed, if \(Y\) is an injective metric space and \(j\colon X \to Y\) an isometric embedding, then as \(i\colon X\to E(X)\) is essential, there exists an isometric embedding \(k\colon E(X) \to Y\) such that \(j=k\circ i\). In what follows, we will often tacitly identify \(X\) with its isometric copy \(i(X) \subset E(X)\). Due to the following extension result, the injective space appearing in Theorem~\ref{thm:main-1-neu} can be taken to be the injective hull of \(X\). 

\begin{Thm}\label{thm:lifting}
Suppose that \(\sigma\) is a reversible conical bicombing on a metric space \(X\). Then there exists a conical bicombing \(\widetilde{\sigma}\) on \(E(X)\) such that \(\widetilde{\sigma}\) and \(\sigma\) coincide on \(X\), that is, \(\widetilde{\sigma}_{xy}=\sigma_{xy}\) for all \(x\), \(y\in X\). In particular, \(X\) is a \(\sigma\)-convex subset of \(E(X)\). 
\end{Thm}

Here, a bicombing \(\sigma\) is \textit{reversible} if \(\sigma_{xy}(t)=\sigma_{yx}(1-t)\) for all \(x\), \(y\in X\) and all \(t\in [0,1]\). In \cite{MR3940917}, it is shown that every complete metric space with a conical bicombing also admits a reversible conical bicombing. Hence, Theorem~\ref{thm:main-1-neu} follows readily from Theorem~\ref{thm:lifting}. 

Theorem \ref{thm:lifting} is applicable to problems of the following form. Let \((P)\) denote a statement about conical bicombings on a metric space \(X\). Then, by Theorem \ref{thm:lifting}, if \((P)\) is true for \(E(X)\), then \((P)\) is also true for \(X\). For instance, by \cite[Theorem 1.4]{MR3940917} if \(X\) is an injective Banach space and \(\sigma\) a conical bicombing on the closed ball \(B(x, 2r)\subset X\), then on \(B(x, r)\) the bicombing \(\sigma\) is given by linear segments.  As a result, every injective Banach space admits only one conical bicombing. If \(X\) is a Banach space then \(E(X)\) admits a Banach space structure whose norm induces the metric of \(E(X)\) (see \cite[Theorem 1]{MR184061}). Hence, by Theorem \ref{thm:lifting} we obtain the following corollary:

\begin{Cor}\label{cor:uniqueBan}
A Banach space admits only one reversible conical bicombing. This unique reversible conical bicombing is given by linear segments. 
\end{Cor}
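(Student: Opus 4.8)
The plan is to combine Theorem~\ref{thm:lifting} with the known rigidity of conical bicombings on injective Banach spaces. Fix a Banach space \(V\) with norm \(\norm{\cdot}\). The assignment \(\sigma^0_{xy}(t):=(1-t)x+ty\) is a reversible bicombing on \(V\), and it is conical because \eqref{eq:conicalDEF} for \(\sigma^0\) is just the scaled triangle inequality \(\norm{(1-t)(x-x')+t(y-y')}\le (1-t)\norm{x-x'}+t\norm{y-y'}\); so existence is immediate and the entire content of Corollary~\ref{cor:uniqueBan} is the \emph{uniqueness} assertion. Hence let \(\sigma\) be an arbitrary reversible conical bicombing on \(V\); one must show \(\sigma=\sigma^0\).

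First I would pass to the injective hull. By Theorem~\ref{thm:lifting} there is a reversible conical bicombing \(\bar\sigma\) on \(E(V)\) with \(e(\sigma_{xy}(t))=\bar\sigma_{e(x)e(y)}(t)\) for all \((x,y,t)\), where \((e,E(V))\) is the injective hull of \(V\). By Isbell \cite{MR184061} the metric of \(E(V)\) is induced by a Banach-space norm, which we again denote \(\norm{\cdot}\), and one may take \(e\colon V\to E(V)\) to be a linear isometric embedding; in particular \(E(V)\) is an injective Banach space.

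Next I would show that \(\bar\sigma\) is the linear-segment bicombing on \(E(V)\). The geodesic \(\bar\sigma_{00}\) runs from \(0\) to \(0\), hence is the constant path \(0\); inserting \(x'=y'=0\) into \eqref{eq:conicalDEF} for \(\bar\sigma\) gives \(\norm{\bar\sigma_{uv}(t)}\le(1-t)\norm{u}+t\norm{v}\) for all \(u,v\in E(V)\). Consequently, for every \(r>0\) the closed ball \(B_{2r}(0)\subset E(V)\) is mapped into itself by \(\bar\sigma\), so \(\bar\sigma\) restricts to a conical bicombing on the metric space \(B_{2r}(0)\). By \cite[Theorem~1.5]{MR3940917} this restriction agrees with linear segments on \(B_r(0)\); since every \(u,v\in E(V)\) lie in some such \(B_r(0)\), letting \(r\to\infty\) yields \(\bar\sigma_{uv}(t)=(1-t)u+tv\) for all \(u,v\in E(V)\) and \(t\in[0,1]\).

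Finally I would descend back to \(V\). For \(x,y\in V\) and \(t\in[0,1]\), using the previous step, the intertwining relation from Theorem~\ref{thm:lifting}, and the linearity of \(e\),
\[
e\bigl(\sigma_{xy}(t)\bigr)=\bar\sigma_{e(x)e(y)}(t)=(1-t)e(x)+t\,e(y)=e\bigl((1-t)x+ty\bigr),
\]
and since \(e\) is injective we conclude \(\sigma_{xy}(t)=(1-t)x+ty=\sigma^0_{xy}(t)\), i.e.\ \(\sigma=\sigma^0\). There is no deep obstacle here once Theorem~\ref{thm:lifting} is available: the only points that require a moment's care are checking that the balls centred at \(0\) are preserved by \(\bar\sigma\) (so that \cite[Theorem~1.5]{MR3940917} applies and \(E(V)\) can be exhausted by them) and recording that Isbell's Banach-space structure on \(E(V)\) may be chosen so that the canonical embedding \(e\) is linear.
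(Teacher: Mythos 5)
Your proof is correct and follows essentially the same route as the paper: lift \(\sigma\) to \(E(V)\) via Theorem~\ref{thm:lifting}, use Isbell's theorem to make \(E(V)\) an injective Banach space, invoke \cite[Theorem~1.5]{MR3940917} to force the lifted bicombing to be given by linear segments, and descend. You merely make explicit two points the paper leaves implicit --- that the balls \(B_{2r}(0)\) are \(\bar\sigma\)-convex, and that \(e\) may be taken linear; the latter can in any case be bypassed by observing that \(\bar\sigma\) is uniquely determined, hence so is \(\sigma=e^{-1}\circ\bar\sigma\circ(e\times e\times \mathrm{id})\), and linear segments already furnish one reversible conical bicombing on \(V\).
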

This may also be established by invoking a result of G\"ahler and Murphy (see \cite{MR642160}). As it turns out, Corollary~\ref{cor:uniqueBan} remains valid if the reversibility assumption is dropped. This is worked out in detail in Section~\ref{sec:doss}. We remark that the classical Mazur-Ulam theorem is a direct consequence of Corollary~\ref{cor:uniqueBan}. Indeed, suppose that  \(f\colon V \to W\) is a surjective isometry. The map \(\sigma\) defined by \((x,y,t)\mapsto f^{-1}\bigl( (1-t)f(x)+t f(y)\bigr)\) is a reversible conical bicombing on \(V\). Hence, by Corollary~\ref{cor:uniqueBan}, \(\sigma\) is given by linear segments, thus \(f\) is affine. 

Schechtman \cite{237871} has recently constructed a non-affine self-isometry \(f\colon C \to C\), where \(C \subset L_1 [0,1]\) is closed, convex and has empty interior. 
It follows immediately from the above argument that such a set \(C\) must necessarily admit more than one conical bicombing.
On the other hand, it follows from a theorem of Mankiewicz (see \cite{MR312214}) that any self-isometry of \(C\) is affine if the interior of \(C\) is nonempty.
This now gives rise to the natural question whether every closed convex sets whose interior is nonempty admits a unique conical bicombing (see \cite[Question 1.6]{MR3940917}). However, it turns out that already the closed upper half-plane \(H\subset \ell_\infty^2\) admits two distinct conical bicombings. This is discussed further in Example~\ref{ex:counterexample}.

\subsection{Improving conical bicombings}
It is often desirable to work with bicombings that satisfy properties which are more restrictive than \eqref{eq:conicalDEF}. 
A bicombing \(\sigma\) is said to be \textit{convex} if \(t\mapsto d(\sigma_{xy}(t), \sigma_{x'y'}(t))\)  is convex on \([0,1]\) for all \(x, y, x^\prime, y'\in X\). There are many examples of conical bicombings that are not convex (see \cite[Example 2.2]{MR3370039}). However, every consistent conical bicombing is convex. We say that a bicombing \(\sigma\) is \textit{consistent} if it is reversible and \(\sigma(x,y,s t)=\sigma(x, \sigma_{xy}(t), s)\) for all \(x\), \(y\in X\) and all \(s\), \(t\in [0,1]\). In \cite{MR1704987}, Kleiner introduced \textit{often convex} metric spaces which in our terminology are metric spaces with a consistent convex bicombing. 
We refer to \cite{chalopin2020helly}, \cite{huang2020morse}, \cite{kleiner2018higher} for some recent applications of consistent convex bicombings. 

Every Gromov hyperbolic group \(\Gamma\) acts properly and cocompactly on the proper metric space \(E(\Gamma)\), provided we endow \(\Gamma\) with the word metric with respect to any finite generating set (see \cite{MR3096307} for more details). In \cite{MR3370039}, Descombes and Lang discovered strong non-positive curvature properties of \(E(\Gamma)\). A geodesic \(\sigma\colon [0,1]\to X\) is \textit{straight} if \(t\mapsto d(\sigma(t), x)\) is convex on \([0,1]\) for all \(x\in X\). Descombes and Lang showed that \(E(\Gamma)\) has unique straight geodesics and the bicombing on \(E(\Gamma)\) given by straight geodesics is the only consistent convex bicombing on \(E(\Gamma)\). In general, it is an open question whether every proper metric space with a conical bicombing also admits a consistent convex bicombing; see \cite[p. 368]{MR3370039} and also \cite[p. 385]{MR4395714}. The following result can be regarded as a first step towards solving this difficult problem.

\begin{Thm}\label{thm:consi}
Let \(X\) be a proper metric space admitting a conical bicombing. Then there exists a consistent bicombing \(\gamma\) on \(X\) such that the following holds. Each curve \(\gamma_{xy}(\cdot)\) is a straight geodesic and \(t\mapsto d\bigl(\gamma_{xy}(t), \gamma_{x'y'}(t)\bigr)\) is convex on \([0,1]\) whenever \(d(x,y)=d(x',y')\). If \(X\) is compact or injective, then \(\gamma\) is furthermore equivariant with respect to the isometry group of \(X\).
\end{Thm}

It seems likely that the bicombing \(\gamma\) of Theorem \ref{thm:consi} is in fact convex. However, we do not know how to prove this.
A key component in the proof of Theorem \ref{thm:consi} is a sequence \((\gamma^{(n)})\) of bicombings satisfying a discrete consistency condition. 
Having \((\gamma^{(n)})\) at hand, \(\gamma\) is obtained via a straightforward ultrafilter argument. We construct \((\gamma^{(n)})\) by means of a fixed point argument on the moduli space \(\CBI(X)\) of all conical bicombings on \(X\).  The moduli space \(\CBI(X)\) is introduced and discussed in detail in Section \ref{sec:FIXED}. We hope that \(\CBI(X)\) may prove useful for further study of metric spaces with a conical bicombing.

Theorem~\ref{thm:consi} can be used to construct a visual boundary for every proper metric space admitting a conical bicombing. Let \(X\) be such a space and let \(\gamma\) denote a consistent bicombing on \(X\) satisfying the properties stated in Theorem~\ref{thm:consi}. A geodesic ray \(\xi\colon \R_+ \to X\), where \(\R_+\coloneqq [0, \infty)\), is said to be a \(\gamma\)-ray provided that \(\xi\big((1-\lambda)s+\lambda t\big)=\gamma(\xi(s),\xi(t), \lambda)\) for all \(0 \leq s \leq t\) and all \(\lambda\in [0,1]\). The visual boundary \(\partial X_\gamma\) is the set of equivalence classes of mutually asymptotic \(\gamma\)-rays. As usual, two geodesic rays \(\xi\), \(\xi^\prime\) are called asymptotic if the function \(t\mapsto d\bigl(\xi(t), \xi^\prime(t)\bigr)\) is bounded.
  
In what follows, we construct for any \(o\in X\) a natural metric \(\bar{d}_o\) on \(\overline{X}_\gamma\coloneqq X\cup \partial_\gamma X\). In Lemma~\ref{ref:unique-rep} we prove the following. For every \(o\in X\) and every \(\bar{x}\in \partial_\gamma X\), there exists a unique \(\gamma\)-ray \(\rho_{o\bar{x}}\) such that \(\rho_{o\bar{x}}(0)=o\) and \([\rho_{o\bar{x}}]=\bar{x}\). To simplify the notation, for each \(x\in X\) we define \(\rho_{o x}\colon \R_+\to X\) as follows: \(\rho_{o x}(t)=x\) for all \(t\geq d(o,x)\) and \(\rho_{o x}(t)=\gamma_{ox}(t / d(o, x))\) otherwise. In Lemma~\ref{lem:topology}, we show that 
\[
\bar{d}_o(x, x')=\int_{0}^\infty d(\rho_{ox}(t), \rho_{ox'}(t))\,e^{-t} \, dt
\]
defines a metric on \(\overline{X}_\gamma\) and the topology on \(\overline{X}_\gamma\) induced by \(\bar{d}_o\) is independent of the basepoint \(o\).

A subset \(A\) of a topological space \(X\) is called \(\mathcal{Z}\)-set if it is closed and there exists a homotopy \(h\colon X\times [0,1]\to X\) such that \(h_t(X)\subset X\setminus A\) for all \(t\in (0,1]\) and \(h_0(x)=x\) for all \(x\in X\).  For example, the boundary of a topological manifold is a \(\mathcal{Z}\)-set in that manifold.
A celebrated result of Bestvina and Mess (see \cite[Theorem~1.2]{MR1096169}) states that the Gromov closure \(\overline{P(\Gamma)}\) of an appropriately chosen Rips complex \(P(\Gamma)\) of a word hyperbolic group \(\Gamma\) has the following properties. \(\overline{P(\Gamma)}\) is an absolute retract and \(\overline{P(\Gamma)} \setminus P(\Gamma)\) is a \(\mathcal{Z}\)-set in \(\overline{P(\Gamma)}\). We have the following analogous result for \(\overline{X}_\gamma\).

\begin{Thm}\label{thm:descombes-lang-gen}
Let \(X\) be a proper metric space admitting a conical bicombing. Then \(\overline{X}_\gamma\) is an absolute retract and \(\partial_\gamma X\) is a \(\mathcal{Z}\)-set in \(\overline{X}_\gamma\).
\end{Thm}

In \cite{MR3370039}, Descombes and Lang established Theorem~\ref{thm:descombes-lang-gen} for general complete metric spaces in the case when \(\gamma\) is a consistent conical bicombing.  
To prove Theorem~\ref{thm:descombes-lang-gen} we closely follow their proof strategy, which is modeled on the boundary construction for Busemann spaces introduced in \cite{MR1425125}. 
The main difference between the proofs is that we cannot use the conical inequality \eqref{eq:conicalDEF} in our proof, since we are working with the bicombing \(\gamma\) from Theorem~\ref{thm:consi} and do not know whether \(\gamma\) is conical or not. This leads to slightly different arguments in several places.

Theorem~\ref{thm:descombes-lang-gen} has an interesting application in geometric group theory. Let \(G\) denote a group. A pair of compact topological spaces \((\overline{X}, Z)\) is called \textit{\(\mathcal{Z}\)-structure of} \(G\) if the following holds:
\begin{enumerate}
\item\label{it:1} \(\overline{X}\) is an absolute retract and \(Z\) is a \(\mathcal{Z}\)-set in \(\overline{X}\);
\item\label{it:2} \(X=\overline{X}\setminus Z\) is a proper metric space on which \(G\) acts geometrically;
\item\label{it:3} for every open cover \(\mathcal{U}\) of \(\overline{X}\) and every compact subset \(C\subset X\) all but finitely many \(G\)-translates of \(C\) are contained in some element of \(\mathcal{U}\). 
\end{enumerate}

The notion of a \(\mathcal{Z}\)-structure was coined by Bestvina in \cite{MR1381603} to formalize the notion of boundary of a group. The above definition is a generalization of Bestvina's original definition and goes back to Dranishnikov \cite{MR2216707}. The existence of a \(\mathcal{Z}\)-structure \((\overline{X}, Z)\) of a group \(G\) has many interesting consequences, since several homological invariants of \(Z\) are related to cohmological invariants of \(G\). We refer the reader to \cite{MR4007577} for a recent survey of \(\mathcal{Z}\)-structures.  
Following Farrell and Lafont (see \cite{MR2130569}), we say that a \(\mathcal{Z}\)-structure is an \(E\mathcal{Z}\)-structure if the action \(G\curvearrowright X\) can be extended to an action \(G \curvearrowright \overline{X}\) by homeomorphisms.

\begin{Cor}\label{cor:ez-structure}
Let \(G\) be a group which acts geometrically on a proper metric space \(X\) admitting a conical bicombing. Then \(G\) admits a \(\mathcal{Z}\)-structure. If \(X\) is an injective metric space, then \(G\) also admits an \(E\mathcal{Z}\)-structure.
\end{Cor}

 There is a wide variety of groups which act geometrically on proper injective metric spaces. Examples include Gromov hyperbolic groups and, more generally, Helly groups, which encompass among others weak Garside groups of finite type and Artin groups of type FC (see \cite{MR4285138} and also \cite{chalopin2020helly} for additional examples).  The fact that every Helly group admits an \(E\mathcal{Z}\)-structure has already been proved by Huang and Osajda \cite{MR4285138}.
We remark that not every group which acts geometrically on an injective metric space is necessarily a Helly group (see \cite[Corollary D]{hughes2022commensurating}).

\subsection{Acknowledgements} I am indebted to Paul Creutz, Urs Lang and Benjamin Miesch for helpful discussions. Moreover, I am indebted to the anonymous reviewer for several suggestions which improved the exposition of the article. Parts of this work are contained in the author's PhD thesis \cite{20.500.11850/398970}. 

\section{\(1\)-Wasserstein distances and barycentric metric spaces}\label{sec:section-two}

\subsection{The \(1\)-Wasserstein distance}\label{sec:wasser} We recall the basic properties of the \(1\)-Wasserstein distance. 
Let \(X\) be a metric space and let \(P(X)\) denote the set of all Radon probability measures on \(X\).
For \(\mu\), \(\nu\in P(X)\) we introduce the \textit{\(1\)-Wasserstein distance}
\[
W_1(\mu, \nu)\coloneqq\inf_{\pi} \int_{X\times X} d(x,y) \,d\pi(x,y) \quad \quad (\mu, \nu\in P(X)),
\]
where the infimum is taken over all couplings of the pair \((\mu, \nu)\). Here,
\(\pi\in P(X\times X)\) (we equip \(X\times X\) with the 1-product metric) is a \textit{coupling} 
of  \((\mu, \nu)\) if \(\pi(B\times X)=\mu(B)\) and \(\pi(X\times B)=\nu(B)\) for all Borel subsets \(B\subset X\).
Let \(P_1(X)\) denote the set of all \(\mu\in P(X)\) such that \(W_1(\mu, \delta_{x_0}) <\infty\) for some  \(x_0\in X\). 
The celebrated Kantorovich-Rubinstein duality theorem states that 
\begin{equation}\label{eq:kanto}
W_1(\mu, \nu)=\sup\biggl\{ \int_X f\, d\mu-\int_X f \, d\nu \,: \,f \in \Lip_1(X) \biggr\}
\end{equation}
for all \(\mu, \nu\in P_1(X)\). We use \(\Lip_1(X)\) to denote the set of all \(1\)-Lispchitz functions \(f\colon X\to \R\). 
We remark that if the supports of \(\mu\) and \(\nu\) are finite, then \eqref{eq:kanto} follows easily from the strong duality theorem of linear programming. For a thorough discussion of the Kantorovich–Rubinstein theorem we refer the reader to the excellent survey article \cite{edwards}.

As a direct consequence of \eqref{eq:kanto}, the pair \((P_1(X), W_1)\) is a metric space.
Moreover, for every \(L\)-Lipschitz map \(f\colon X\to Y\) the push-forward map \(f_\#\colon P_1(X)\to P_1(Y)\) is \(L\)-Lipschitz as well; see \cite[Lemma 2.1]{MR2480975}. 
\begin{Lem}\label{lem:isoW}
Let \(X\) and \(Y\) denote metric spaces. If \(i\colon X\to Y\) is an isometric embedding, then \(i_\#\colon P_1(X)\to P_1(Y)\) is an isometric embedding as well.
\end{Lem}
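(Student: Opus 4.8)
The plan is to combine the Kantorovich--Rubinstein duality \eqref{eq:kanto} with the classical McShane--Whitney Lipschitz extension theorem. First, observe that $e_\sharp$ is well-defined and $1$-Lipschitz: an isometric embedding is in particular $1$-Lipschitz, so the remark preceding the lemma shows that $e_\sharp$ does map $P_1(X)$ into $P_1(Y)$ and that $W_1(e_\sharp\mu,e_\sharp\nu)\le W_1(\mu,\nu)$ for all $\mu,\nu\in P_1(X)$. It therefore remains only to prove the reverse inequality $W_1(\mu,\nu)\le W_1(e_\sharp\mu,e_\sharp\nu)$.

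To this end I would fix $\mu,\nu\in P_1(X)$ and an arbitrary $f\in\Lip_1(X)$; after subtracting a constant we may assume that $\int_X f\,d\mu$ and $\int_X f\,d\nu$ are finite. Since $e$ is an isometric embedding, $f\circ e^{-1}$ is a well-defined $1$-Lipschitz function on the subset $e(X)\subset Y$, and hence, by McShane--Whitney, it extends to some $g\in\Lip_1(Y)$ with $g\circ e=f$. Using the defining property of the push-forward one then gets
\[\int_Y g\,d(e_\sharp\mu)-\int_Y g\,d(e_\sharp\nu)=\int_X(g\circ e)\,d\mu-\int_X(g\circ e)\,d\nu=\int_X f\,d\mu-\int_X f\,d\nu.\]
Taking the supremum over all $f\in\Lip_1(X)$ and applying \eqref{eq:kanto} to both sides yields $W_1(\mu,\nu)\le W_1(e_\sharp\mu,e_\sharp\nu)$, and together with the opposite inequality this gives $W_1(e_\sharp\mu,e_\sharp\nu)=W_1(\mu,\nu)$, i.e.\ $e_\sharp$ is an isometric embedding.

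The argument is essentially formal, and I do not expect a serious obstacle: the only genuine input is the McShane--Whitney extension theorem, used to pass from a $1$-Lipschitz function on $e(X)$ to a $1$-Lipschitz function on all of $Y$; notably this requires nothing about $Y$ beyond being a metric space (no completeness, no injectivity). The one bookkeeping point worth checking is the integrability of $g$ against $e_\sharp\mu$ and $e_\sharp\nu$, which is immediate once $f$ is normalised to vanish at a base point $x_0$, since then $\lvert g(y)\rvert\le d(y,e(x_0))$ and $e_\sharp\mu,e_\sharp\nu\in P_1(Y)$.
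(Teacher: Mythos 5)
Your proof is correct and follows essentially the same route as the paper: the paper also reduces the claim, via the duality \eqref{eq:kanto}, to showing that every \(f\in\Lip_1(X)\) is of the form \(g\circ e\) for some \(g\in\Lip_1(Y)\), and it produces \(g\) by writing out the McShane--Whitney extension \(g(y)=\inf_{x\in X}\bigl(f(x)+d(y,e(x))\bigr)\) explicitly rather than citing the extension theorem by name. Your remark on integrability after normalising \(f\) at a base point is a reasonable extra bookkeeping step; no gap.
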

\begin{proof}
It suffices to prove that the map \(\Lip_1(Y)\to \Lip_1(X)\) defined by \(g\mapsto g\circ i\) is surjective. To this end, let \(f\in \Lip_1(X)\) and let \(g\colon Y \to \R \) be defined by
\begin{equation*}
y\mapsto \inf_{x\in X} \,\bigl[ f(x)+d(y, i(x))\bigr].
\end{equation*}
We remark that such functions \(g\) occur naturally in the context of the McShane extension theorem (see \cite[Remark 2.4]{MR4103879} for more information). Notice that \(f(x')+d(i(x), i(x'))\geq f(x')+\abs{f(x)-f(x')}\geq f(x)\) for all \(x\), \(x'\in X\). Consequently, since \(f(x)\geq g(i(x))\), it follows that \(f=g\circ i\). In addition,  for all \(y\), \(y'\in Y\), 
\begin{align*}
\abs{ g(y) -g(y')}&=\abs{ \inf_{x\in X} \bigl[ f(x)+d(y, i(x))\bigr]-\inf_{x\in X} \bigl[ f(x)+d(y', i(x))\bigr]} \\
&\leq\sup_{x\in X}\, \abs{ d(i(x),y)-d(i(x), y')} \leq d(y,y').
\end{align*}
Hence, \(g\) is a \(1\)-Lipschitz function on \(Y\) such that \(f=g\circ i\), as desired. 
\end{proof}
 
In this article, we will mainly work with measures which are supported at finitely many points. For such measures, the following formula for the \(1\)-Wasserstein distance is well-known. 

\begin{Prop}\label{prop:formula}
Assume that \(x_1, y_1, \dots, x_n, y_n\in X\) are (not necessarily distinct) points of a metric space \(X\). Then
\[W_1\Big(\frac{1}{n}\sum_{i=1}^n \delta_{x_i},\frac{1}{n}\sum_{i=1}^n \delta_{y_i}\Big)=\frac{1}{n} \,\min_{\pi\in S_n} \sum_{i=1}^n d(x_i, y_{\pi(i)}),\]
where \(S_n\) denotes the symmetric group of degree \(n\). 
\end{Prop}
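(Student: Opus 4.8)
The plan is to prove the two inequalities separately. For the inequality \(\leq\), I would observe that every permutation \(\pi \in S_n\) gives rise to a coupling of the two uniform measures: namely \(\gamma_\pi := \frac{1}{n}\sum_{i=1}^n \delta_{(x_i, y_{\pi(i)})}\). One checks directly from the definition that \(\gamma_\pi(B \times X) = \frac{1}{n}\#\{i : x_i \in B\}\) and similarly for the second marginal, using that \(\pi\) is a bijection, so \(\gamma_\pi \in \Lambda(\mu,\nu)\) with \(\mu = \frac1n\sum\delta_{x_i}\), \(\nu = \frac1n\sum\delta_{y_i}\). Then \(W_1(\mu,\nu) \leq \int d\, d\gamma_\pi = \frac1n\sum_{i=1}^n d(x_i, y_{\pi(i)})\), and taking the minimum over \(\pi\) gives the bound.

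For the reverse inequality \(\geq\), I would use Kantorovich--Rubinstein duality \eqref{eq:kanto}: pick any \(f \in \Lip_1(X)\) and estimate \(\int f\, d\mu - \int f\, d\nu = \frac1n\sum_{i=1}^n \bigl(f(x_i) - f(y_i)\bigr)\). The key point is that for \emph{any} \(\pi \in S_n\), since reindexing the sum \(\sum f(y_i)\) by \(\pi\) does not change its value, we have \(\frac1n\sum_i (f(x_i) - f(y_i)) = \frac1n\sum_i (f(x_i) - f(y_{\pi(i)})) \leq \frac1n\sum_i d(x_i, y_{\pi(i)})\) by the \(1\)-Lipschitz property. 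Choosing \(\pi\) to be a minimizer, we get \(\int f\,d\mu - \int f\,d\nu \leq \min_{\pi \in S_n}\frac1n\sum_i d(x_i, y_{\pi(i)})\) for every \(f\), and taking the supremum over \(f\) yields \(W_1(\mu,\nu) \leq \min_\pi \frac1n\sum_i d(x_i,y_{\pi(i)})\). Wait --- this is the same direction as before; in fact both arguments give \(W_1 \leq \min_\pi(\cdots)\), so the content is really the matching lower bound \(W_1(\mu,\nu) \geq \min_\pi \frac1n\sum_i d(x_i,y_{\pi(i)})\).

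To get that lower bound, I would invoke the extreme point structure of the coupling polytope. The set \(\Lambda(\mu,\nu)\) for these finitely-supported uniform measures is (after clearing denominators) the Birkhoff polytope of doubly stochastic matrices scaled by \(1/n\); by the Birkhoff--von Neumann theorem its extreme points are exactly the scaled permutation matrices \(\gamma_\pi\). Since \(\gamma \mapsto \int d\, d\gamma\) is a linear functional on this compact convex set, its infimum is attained at an extreme point, hence \(W_1(\mu,\nu) = \min_{\pi \in S_n}\int d\,d\gamma_\pi = \min_{\pi \in S_n}\frac1n\sum_i d(x_i, y_{\pi(i)})\), which combined with the easy \(\leq\) direction closes the proof. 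One subtlety when the points are not distinct: distinct permutations may induce the same coupling, but this causes no problem since we range over all of \(S_n\) and the minimum is unaffected.

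The main obstacle is the lower bound, i.e.\ showing no coupling beats the best permutation. I would handle it via Birkhoff--von Neumann as above; alternatively, a self-contained route is to show directly that any \(\gamma \in \Lambda(\mu,\nu)\) can be written as a convex combination \(\gamma = \sum_\pi \lambda_\pi \gamma_\pi\) of the permutation couplings (a standard "marriage/augmenting-path" decomposition argument for doubly stochastic matrices), whence \(\int d\, d\gamma = \sum_\pi \lambda_\pi \int d\,d\gamma_\pi \geq \min_\pi \int d\,d\gamma_\pi\). Either way the argument is elementary; as noted in the text, it also follows from strong LP duality.
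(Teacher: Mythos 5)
Your final argument --- identifying the set of couplings of the two uniform measures with the (suitably scaled) Birkhoff polytope of doubly stochastic matrices and invoking the Birkhoff--von Neumann theorem so that the linear functional \(\gamma\mapsto\int d\,d\gamma\) attains its minimum at a permutation coupling --- is correct and is essentially the paper's own proof. The Kantorovich--Rubinstein detour in your second paragraph only reproves the easy inequality \(W_1\leq\min_\pi\), as you noticed yourself, so it can simply be deleted without affecting the argument.
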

\begin{proof}
We sketch the proof indicated in \cite[p. 5]{MR1964483}. Another proof using Hall’s marriage theorem can be found in \cite[p. 953]{MR3563261}. We abbreviate \(\mu\coloneqq\frac{1}{n}\sum_{i=1}^n \delta_{x_i}\) and \(\nu\coloneqq\frac{1}{n}\sum_{i=1}^n \delta_{y_i}\). 
Clearly, 
\[ W_1(\mu, \nu)=\min\Big\{ \frac{1}{n}\sum_{i,j=1}^n p_{ij} d(x_i, y_j) : P=(p_{ij}) \textrm{ is doubly stochastic}\Big\}. \]
A non-negative \(n\times n\) matrix \(P\) is doubly stochastic if \(Pj=P^tj=j\) for the all-ones vector \(j\in \R^n\). 
The Birkhoff–von Neumann theorem states that each doubly stochastic matrix is equal to a finite convex combination of permutation matrices. Hence, by the above, 
\[
W_1(\mu, \nu)=\min\Big\{ \frac{1}{n}\sum_{i=1}^n \delta_{i, \pi(i)}d(x_i, y_j) : \pi\in S_n \Big\},
\]
as desired.
\end{proof}

Our next lemma computes \(W_1(\mu, \nu)\) in the special case when the supports of \(\mu\) and \(\nu\) consist of at most two points. The proof is straightforward and follows from solving a certain system of linear equations. Alternatively, we could also invoke Proposition \ref{prop:formula} and a simple limit argument. In the following, we use the notation \(a\vee b \coloneqq\max\{ a, b\}\) and \(a \wedge b\coloneqq\min\{ a, b\}\). 

\begin{Lem}\label{lem:wasser} Let \(x_1\), \(x_2\), \(y_1\), \(y_2\in X\) and \(s\), \(t\in [0,1]\). Then 
\begin{align*}
&\hspace{-4em}W_1\big((1-s)\delta_{x_1}+s\delta_{x_2}, (1-t)\delta_{y_1}+t\delta_{y_2}\big) \\
=\min_{\lambda\in I_{s,t}}\Bigl[&(1-(s+t)+\lambda)d(x_1,y_1)+(s-\lambda)d(x_2, y_1) +(t-\lambda)d(x_1, y_2)+\lambda d(x_2,y_2)\Bigr],
\end{align*}
where \(I_{s,t}\coloneqq\bigl[0\vee (s+t-1), s\wedge t\bigr]\).
\end{Lem}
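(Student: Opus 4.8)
The plan is to compute the Wasserstein distance directly using the fact that both measures are supported on at most two points, so that couplings are parametrised by a single real number. A coupling $\gamma$ of $(1-s)\delta_{x_1}+s\delta_{x_2}$ and $(1-t)\delta_{y_1}+t\delta_{y_2}$ is a $2\times 2$ matrix of nonnegative masses $\gamma_{ij}$ (mass transported from $x_i$ to $y_j$) whose row sums are $(1-s,s)$ and whose column sums are $(1-t,t)$. First I would set $\lambda:=\gamma_{22}$; then the marginal constraints force $\gamma_{21}=s-\lambda$, $\gamma_{12}=t-\lambda$, and $\gamma_{11}=1-(s+t)+\lambda$. Nonnegativity of all four entries is equivalent to $\lambda\ge 0$, $\lambda\le s$, $\lambda\le t$, and $\lambda\ge s+t-1$, i.e. to $\lambda\in I_{s,t}=[(s+t-1)\vee 0,\ s\wedge t]$. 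Note this interval is nonempty precisely because $(s+t-1)\vee 0\le s\wedge t$ holds for all $s,t\in[0,1]$, so the minimum is taken over a nonempty compact set.

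Next I would observe that the cost $\int_{X\times X} d\,d\gamma$ is exactly
\[
(1-(s+t)+\lambda)\,d(x_1,y_1)+(s-\lambda)\,d(x_2,y_1)+(t-\lambda)\,d(x_1,y_2)+\lambda\,d(x_2,y_2),
\]
so $W_1$ of the two measures, being the infimum over all couplings, equals the infimum of this expression over $\lambda\in I_{s,t}$; since the expression is affine (hence continuous) in $\lambda$ and $I_{s,t}$ is compact, the infimum is attained and is a genuine minimum. This already gives the formula. The only genuine subtlety is the degenerate cases: if $s$ or $t$ lies in $\{0,1\}$, or if some of the points $x_i,y_j$ coincide, one should check that the coupling description above still enumerates exactly the probability measures on $X\times X$ with the prescribed marginals (e.g. when $s=0$ the point $x_2$ carries no mass but the matrix picture is unaffected because $\gamma_{21}=\gamma_{22}=0$ is forced); this is routine bookkeeping. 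I expect the main, and only mild, obstacle to be writing this verification cleanly enough that the reader is convinced every coupling arises from some admissible $\lambda$ and conversely.

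Alternatively, as the statement already hints, one can avoid couplings entirely: apply Proposition~\ref{prop:formula} with rational $s,t$ of the form $k/n$, in which case $(1-s)\delta_{x_1}+s\delta_{x_2}$ and $(1-t)\delta_{y_1}+t\delta_{y_2}$ become uniform measures on $n$ points (with repetitions), so $W_1$ is the minimum over $\pi\in S_n$ of $\frac1n\sum d(x_i,y_{\pi(i)})$; grouping the terms of such a permutation by which block each index lands in recovers exactly the four-term expression with $\lambda$ a multiple of $1/n$, and the range of $\lambda$ is $I_{s,t}$ intersected with $\frac1n\Z$. Letting $n\to\infty$ along denominators and using continuity of $W_1$ and of the affine cost in $(s,t,\lambda)$ then extends the formula to all real $s,t\in[0,1]$. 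I would present the direct coupling argument as the main proof since it is self-contained and shorter, and merely remark on the limiting argument as the paper does.
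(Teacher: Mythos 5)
Your proposal is correct and follows essentially the same route as the paper: both parametrize the couplings of the two two-point measures by a single real parameter (the mass sent from \(x_2\) to \(y_2\)), identify the nonnegativity constraints with \(\lambda\in I_{s,t}\), and minimize the resulting affine cost; the paper likewise notes the alternative via Proposition \ref{prop:formula} and a limit argument, just as you do.
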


\begin{proof}
We abbreviate \(\mu\coloneqq(1-s)\delta_{x_1}+s\delta_{x_2}\) and \(\nu\coloneqq(1-t)\delta_{y_1}+t\delta_{y_2}\). Notice that \(\pi \in P_1(X\times X)\) is a coupling of \((\mu, \nu)\) if and only if \(\pi=\sum_{i,j} \pi_{ij}\delta_{(x_i,y_j)}\) with \(0\leq \pi_{ij}\leq 1\) and
\begin{equation*}
\pi_{11}+\pi_{12}=1-s, \quad \pi_{21}+\pi_{22}=s, \quad \pi_{11}+\pi_{21}=1-t, \quad \pi_{12}+\pi_{22}=t.  
\end{equation*}
The solution set of this system of linear equations equals
\(v(s,t)+\big\{(\lambda,\, -\lambda, \, -\lambda, \, \lambda) : \lambda\in \R \big\},\) where \(v(s,t)\coloneqq \bigl(1-(s+t), \, t,\, s, \,0\bigr)\).
Since \(0 \leq \pi_{ij} \leq 1\), letting \(I_{s,t}\coloneqq\bigl[0\vee (s+t-1), s\wedge t\bigr]\) we get that \(W_1(\mu, \nu)\) is equal to
\begin{align*}
\min_{\lambda\in I_{s,t}}\Bigl[&(1-(s+t)+\lambda)d(x_1,y_1)+(t-\lambda)d(x_1,y_2)+(s-\lambda)d(y_1,x_2)+\lambda d(x_2,y_2)\Bigr],
\end{align*}
as was to be shown. 
\end{proof}

\subsection{Barycentric metric spaces}\label{sec:Contracting}

In what follows, we introduce barycentric metric spaces and recall their close connection to conical bicombings. 
The following definition is due to Sturm (see \cite[Remark 6.4]{MR2039961}).

\begin{Def}
Let \(X\) denote a metric space. A \(1\)-Lipschitz map \(\beta\colon P_1(X)\to X\) is a \textit{contracting barycenter map} if \(\beta(\delta_x)=x\) for all \(x\in X\). A metric space is said to be a \textit{barycentric metric space} if it admits a contracting barycenter map. 
\end{Def} 
There are many examples of barycentric metric spaces. 
In particular, every injective metric space is barycentric. This can be seen by considering the isometric embedding \(X\to P_1(X)\) defined by \(x\mapsto \delta_x\). Moreover, every Banach space admits a unique contracting barycenter map. Indeed, one can show that if \(E\) denotes a real Banach space, then  \(\beta\colon P_1(E)\to E\) defined by
\[
\beta(\mu)\coloneqq \int_E x\, \mu(dx) ,
\]
where the integral on the right hand side is the strong Bochner integral, is the only contracting barycenter map on \(E\) (see \cite[Proposition 3.6]{MR3819996}). It is well-known that the Cartan barycenter map on a complete \(\CAT(0)\) spaces is contracting (see \cite[Theorem 6.3]{MR2039961} or \cite[Lemma 4.2]{MR1810752}), and so every complete \(\CAT(0)\) space is barycentric. More generally, Navas \cite{MR3035300} established that in fact every complete Busemann space is a barycentric metric space.  

In the following lemma we show by standard arguments that every barycentric metric space admits a conical bicombing.

\begin{Lem}\label{lem:trivial1}
Suppose that \(\beta\colon P_1(X)\to X\) is a contracting barycenter map on a metric space \(X\). Then \(\sigma_\beta\colon X\times X \times [0,1]\to X\) defined by 
\[(x,y,t)\mapsto \beta\bigl( (1-t)\delta_x+t \delta_y \bigr)\] 
is a reversible conical  bicombing on \(X\).
\end{Lem}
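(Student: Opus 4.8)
The plan is to verify in turn the three defining properties: that $\sigma_\beta$ is a geodesic bicombing, that it is conical, and that it is reversible. Each of these reduces to an estimate on the first Wasserstein distance between two measures supported on at most two points, combined with the hypotheses that $\beta$ is $1$-Lipschitz and that $\beta(\delta_x)=x$ for all $x\in X$.

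First I would dispose of the endpoint conditions $\sigma_\beta(x,y,0)=\beta(\delta_x)=x$ and $\sigma_\beta(x,y,1)=\beta(\delta_y)=y$, which are immediate. For the geodesic property, fix $x,y\in X$ and $0\le s\le t\le 1$. Since $\beta$ is $1$-Lipschitz,
\[ d\bigl(\sigma_\beta(x,y,s),\sigma_\beta(x,y,t)\bigr)\le W_1\bigl((1-s)\delta_x+s\delta_y,\,(1-t)\delta_x+t\delta_y\bigr), \]
and by Lemma \ref{lem:wasser} (applied with $x_1=y_1=x$ and $x_2=y_2=y$) the right-hand side equals $\min_\lambda(s+t-2\lambda)\,d(x,y)=(t-s)\,d(x,y)$, the minimum over the admissible interval being attained at the endpoint $\lambda=s\wedge t=s$. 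Taking $s,t$ to be $0,s$ and then $t,1$ in this same estimate gives $d(x,\sigma_\beta(x,y,s))\le s\,d(x,y)$ and $d(\sigma_\beta(x,y,t),y)\le(1-t)\,d(x,y)$; feeding these into the triangle inequality
\[ d(x,y)\le d\bigl(x,\sigma_\beta(x,y,s)\bigr)+d\bigl(\sigma_\beta(x,y,s),\sigma_\beta(x,y,t)\bigr)+d\bigl(\sigma_\beta(x,y,t),y\bigr) \]
yields the reverse bound $d(\sigma_\beta(x,y,s),\sigma_\beta(x,y,t))\ge(t-s)\,d(x,y)$. Hence $t\mapsto\sigma_\beta(x,y,t)$ is a metric geodesic from $x$ to $y$.

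For conicality, fix $x,y,x',y'\in X$ and $t\in[0,1]$; using again that $\beta$ is $1$-Lipschitz one bounds $d(\sigma_\beta(x,y,t),\sigma_\beta(x',y',t))$ by $W_1\bigl((1-t)\delta_x+t\delta_y,\,(1-t)\delta_{x'}+t\delta_{y'}\bigr)$, and the coupling $(1-t)\delta_{(x,x')}+t\,\delta_{(y,y')}$ of this pair of measures (whose marginals are exactly the two measures in question) shows that the latter Wasserstein distance is at most $(1-t)\,d(x,x')+t\,d(y,y')$, which is precisely \eqref{eq:conicalDEF}; alternatively this can be read off from Lemma \ref{lem:wasser} with $s=t$. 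Finally, reversibility is automatic, since a finitely supported measure does not see the order of its atoms: $\sigma_\beta(y,x,1-t)=\beta\bigl(t\,\delta_y+(1-t)\delta_x\bigr)=\beta\bigl((1-t)\delta_x+t\,\delta_y\bigr)=\sigma_\beta(x,y,t)$. The only point requiring any actual computation is the lower bound in the geodesic property, and even that is nothing more than the triangle inequality applied to the upper bounds already established; I do not anticipate any genuine obstacle.
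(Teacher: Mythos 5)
Your proof is correct and follows essentially the same route as the paper: bound distances between $\sigma_\beta$-points by $W_1$-distances using that $\beta$ is $1$-Lipschitz, evaluate those $W_1$-distances for two-point measures, and squeeze with the triangle inequality to get the geodesic property. The only (harmless) difference is in the conical inequality, where you exhibit the coupling $(1-t)\delta_{(x,x')}+t\,\delta_{(y,y')}$ directly, whereas the paper first proves the one-endpoint estimate $d(\sigma_{xy}(t),\sigma_{xz}(t))\le t\,d(y,z)$ and then invokes reversibility; both yield \eqref{eq:conicalDEF}.
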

\begin{proof}
Fix \(x\), \(y\in X\) and \(s\), \(t\in [0,1]\) such that \(s\leq t\). We abbreviate \(\sigma\coloneqq\sigma_\beta\). Using that \(\beta\) is a contracting barycenter map, we obtain 
\begin{equation*}
\begin{split}
d(\sigma_{xy}(s), \sigma_{xy}(t)) \leq W_1\bigl((1-s)\delta_x+s\delta_y, (1-t)\delta_x+t\delta_y\bigr)=(t-s)\, d(x,y),
\end{split}
\end{equation*}
where the equality is due to Lemma \ref{lem:wasser}. Since
\[
d(x,y)\leq d(x, \sigma_{xy}(s))+d(\sigma_{xy}(s), \sigma_{xy}(t))+d(\sigma_{xy}(t),y) \leq d(x,y),
\]
it follows that \(d(\sigma_{xy}(s), \sigma_{xy}(t))=(t-s) d(x,y)\), and so \(\sigma_{xy}\) is a geodesic from \(x\) to \(y\). Next, we prove \eqref{eq:conicalDEF}. Let \(t\in [0,1]\). Using Lemma \ref{lem:wasser}, we obtain 
\begin{equation}\label{eq:onical-simple}
d(\sigma_{xy}(t), \sigma_{xy}(t))\leq W_1\big((1-t)\delta_{x}+t \delta_{y}, (1-t)\delta_{x}+t \delta_{z} \big) = t\, d(y, z)
\end{equation}
for all \(x\), \(y\), \(z\in X\). Since \(\sigma\) is reversible, 
\[
d(\sigma_{xy}(t), \sigma_{x'y'}(t))\leq d(\sigma_{xy}(t), \sigma_{xy'}(t))+d(\sigma_{y'x}(1-t), \sigma_{y' x'}(1-t)),
\] 
and thus by using \eqref{eq:onical-simple}, we obtain \eqref{eq:conicalDEF}, as desired.
\end{proof}

Conversely, every complete metric space with a conical bicombing is barycentric:

\begin{Thm}\label{thm:equivBar}
Let \(X\) denote a complete metric space. Then the following statements are equivalent:
\begin{enumerate}
\item \(X\) admits a conical bicombing.
\item \(X\) is a barycentric metric space.
\end{enumerate}
\end{Thm}

Theorem \ref{thm:equivBar} is essentially known. The key idea leading to Theorem \ref{thm:equivBar} is a
1-Lipschitz barycenter construction first described by Es-Sahib and Heinich \cite{MR1768010}. This barycenter construction has been improved by Navas in \cite{MR3035300}. A streamlined proof of Navas's construction using elementary statistics can be found in \cite{MR3563261}. The implication \((2.) \Longrightarrow (1.)\) is a direct consequence of Lemma~\ref{lem:trivial1}. The other direction follows from the following result, which is essentially due to Descombes (see \cite{MR3563261}). 

\begin{Thm}\label{thm:contBary}
Let \(X\) denote a complete metric space admitting a conical bicombing \(\sigma\). Then there exists a contracting barycenter map \(\beta_\sigma \colon P_1(X)\to X\) such that \(\beta_\sigma(\mu)\in \overline{\conv}_\sigma(\spt(\mu))\) for all \(\mu \in P_1(X)\). 
\end{Thm}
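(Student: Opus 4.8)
The plan is to build the barycenter first on measures of the form $\mu=\tfrac1n\sum_{i=1}^n\delta_{x_i}$ (uniform measures on a multiset of $n$ points), by induction on $n$ in the spirit of Es-Sahib--Heinich \cite{MR1768010} and Navas \cite{MR3035300} as streamlined by Descombes \cite{MR3563261}, and then to extend it to all of $P_1(X)$ by $W_1$-continuity, using that $X$ is complete. As a preliminary step, invoke \cite{MR3940917} to replace $\sigma$ by a reversible conical bicombing; one checks that the reversible bicombing produced there keeps its geodesics inside the original $\sigma$-convex hulls, so that $\overline{\conv}_{\sigma'}(A)\subseteq\overline{\conv}_\sigma(A)$ for every $A\subseteq X$ and it is harmless to assume from now on that $\sigma$ is reversible. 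In particular $m(x,y):=\sigma_{xy}(\tfrac12)=\sigma_{yx}(\tfrac12)$ is then a symmetric midpoint lying in $\overline{\conv}_\sigma\{x,y\}$ with $d(m(x,y),m(x',y'))\le\tfrac12 d(x,x')+\tfrac12 d(y,y')$.

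For the induction, set $\beta_1=\mathrm{id}$, and assuming symmetric maps $\beta_1,\dots,\beta_{n-1}$ have been constructed, each fixing constant tuples, taking values in $\overline{\conv}_\sigma$ of its arguments, and satisfying $d(\beta_k(\mathbf x),\beta_k(\mathbf y))\le\tfrac1k\sum_{i=1}^k d(x_i,y_i)$, define the leave-one-out map $T\colon X^n\to X^n$ by letting $T(\mathbf x)_i$ be $\beta_{n-1}$ applied to the multiset $\mathbf x$ with $x_i$ removed (for $n=2$ simply put $T(x_1,x_2)=(m(x_1,x_2),m(x_1,x_2))$). Using the induction hypothesis, its consequence that $\beta_{n-1}$ is $W_1$-Lipschitz, and Proposition \ref{prop:formula}, one verifies: (i) $T$ contracts diameters, $\diam(T(\mathbf x))\le\tfrac1{n-1}\diam(\mathbf x)$, whence together with completeness of $X$ the orbit $(T^k(\mathbf x))_k$ converges to a constant tuple $(b,\dots,b)$; set $\beta_n(\mathbf x):=b$; and (ii) $T$ does not increase the aligned cost, $\tfrac1n\sum_i d(T(\mathbf x)_i,T(\mathbf y)_i)\le\tfrac1n\sum_i d(x_i,y_i)$, by a short double-counting identity. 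Passing (ii) to the limit yields $d(\beta_n(\mathbf x),\beta_n(\mathbf y))\le\tfrac1n\sum_i d(x_i,y_i)$; applying this to a relabelling of $\mathbf y$ and using the evident symmetry of $\beta_n$ gives $d(\beta_n(\mathbf x),\beta_n(\mathbf y))\le\min_{\pi\in S_n}\tfrac1n\sum_i d(x_i,y_{\pi(i)})$, which by Proposition \ref{prop:formula} equals $W_1(\tfrac1n\sum\delta_{x_i},\tfrac1n\sum\delta_{y_i})$. Since each $T(\mathbf x)_i$ lies in $\overline{\conv}_\sigma$ of a sub-multiset, hence in the closed set $\overline{\conv}_\sigma\{x_1,\dots,x_n\}$, so does the limit $\beta_n(\mathbf x)$, and $\beta_n$ fixes constant tuples. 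One also checks that $\beta_n$ is unchanged when the multiset is replicated, so that the $\beta_n$ are mutually compatible and $\mu\mapsto\beta(\mu)$ is well defined and $1$-Lipschitz on the set $\mathcal D$ of finitely supported measures with rational weights.

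It remains to extend $\beta$ from $\mathcal D$ to $P_1(X)$. Since $\mathcal D$ is $W_1$-dense in $P_1(X)$ and $\beta|_{\mathcal D}$ is $1$-Lipschitz, completeness of $X$ yields a unique $1$-Lipschitz extension $\beta_\sigma\colon P_1(X)\to X$ with $\beta_\sigma(\delta_x)=x$. For the convex-hull statement, given $\mu\in P_1(X)$ choose $\mu_j\in\mathcal D$ with $\spt(\mu_j)\subseteq\spt(\mu)$ and $W_1(\mu_j,\mu)\to0$ (partition $\spt(\mu)$ into small pieces and move mass to nearby points of a countable dense subset of $\spt(\mu)$, controlling the tail by finiteness of the first moment); then $\beta_\sigma(\mu_j)\in\overline{\conv}_\sigma(\spt(\mu_j))\subseteq\overline{\conv}_\sigma(\spt(\mu))$, and as this set is closed and $\beta_\sigma(\mu_j)\to\beta_\sigma(\mu)$, we conclude $\beta_\sigma(\mu)\in\overline{\conv}_\sigma(\spt(\mu))$.

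The main obstacle I anticipate lies entirely in the inductive construction and the convex-hull bookkeeping. First, making the leave-one-out iteration converge requires the diameter-contraction estimate, which for $n\ge3$ follows from Proposition \ref{prop:formula} applied to sub-multisets but at $n=2$ genuinely needs a symmetric $1$-Lipschitz midpoint and therefore the reduction to a reversible bicombing: a not-necessarily-reversible conical bicombing need not have unique or symmetrizable midpoints, and the naive iteration $(x_1,x_2)\mapsto(\sigma_{x_1x_2}(\tfrac12),\sigma_{x_2x_1}(\tfrac12))$ need not contract. Second, one must carry membership in $\overline{\conv}_\sigma(\cdot)$ faithfully through every limit and, crucially, through the passage to the reversible bicombing, which is exactly why one has to check that the reversibilization of \cite{MR3940917} leaves geodesics inside the original $\sigma$-convex hulls. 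The replication-invariance needed for well-definedness on $\mathcal D$ is more bookkeeping than idea, but it cannot be skipped.
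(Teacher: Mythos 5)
Your overall strategy---recursive Es-Sahib--Heinich/Navas barycenters on uniform finitely supported measures, followed by a density/completeness extension---is the same as the paper's, and most individual steps (reduction to a reversible bicombing whose convex hulls sit inside the original ones, diameter contraction of the leave-one-out map, the aligned-cost estimate, the convex-hull bookkeeping) are sound. But there is one genuine gap, and it sits exactly where you wrote ``one also checks'': the maps $\beta_n$ produced by iterating the leave-one-out operator are \emph{not} known to be replication-invariant; there is no reason that $\beta_{kn}$ applied to the $k$-fold replicated multiset returns $\beta_n(\mathbf{x})$, and in general it does not. This consistency failure is precisely the obstruction that forced Navas \cite{MR3035300} and Descombes \cite{MR3563261} to modify the Es-Sahib--Heinich construction. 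Without it your map $\beta$ is not well defined on $\mathcal D$ (two representations $\tfrac1n\sum_i\delta_{x_i}=\tfrac1{kn}\sum_j\delta_{x'_j}$ of the same rational measure may receive different values), and the $1$-Lipschitz estimate between measures with different denominators---which you need both for well-definedness and for the density extension---also breaks down, since comparing such measures via Proposition \ref{prop:formula} requires passing to a common denominator by replication.

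The paper circumvents this by not using $b_n(\mathbf{x})$ itself but rather the stabilized value $b(\mathbf{x}):=\lim_{k\to\infty} b_{nk}(Q^k(\mathbf{x}))$, the limit over all replications; the existence of this limit is \cite[Theorem 2.5 (1)]{MR3563261} and is the real content hidden behind your phrase. The stabilized value is replication-invariant essentially by construction, still satisfies the permutation-minimized Lipschitz bound \eqref{eq:1}, and still lies in $\overline{\conv}_\tau(\{x_1,\dots,x_n\})$, so the induced map on $P_\Q(X)$ is well defined and $1$-Lipschitz, after which your extension and convex-hull arguments go through unchanged. To repair your write-up you should either insert this additional limit over replications or supply a proof of replication invariance of your $\beta_n$, which is not expected to hold in this generality.
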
 

The support \(\spt(\mu)\) is the set of all points \(x\in X\) such that \(\mu(U) > 0\) for all open subsets \(U\subset X\) containing \(x\).
For \(A\subset X\) the \textit{closed \(\sigma\)-convex hull} of \(A\), denoted by \(\overline{\textrm{conv}}_{\sigma}(A)\), is the closure of the smallest \(\sigma\)-convex set that contains \(A\). 

\begin{proof}[Proof of Theorem \ref{thm:contBary}]
We give only the main ideas of the proof. By virtue of \cite[Proposition 1.3]{MR3940917}, we obtain a reversible conical bicombing \(\tau\) on \(X\) such that \(\overline{\conv}_{\tau}(A)\subset \overline{\conv}_{\sigma}(A)\) for all \(A\subset X\). We set \(b_1(x)\coloneqq x\) and \(b_2(x,y)\coloneqq\tau_{xy}(\tfrac{1}{2})\) for all \(x,y\in X\). Using \cite[Proposition 3.4]{MR3819996}, we obtain a sequence of maps \((b_n\colon X^n \to X)_{n\geq 3}\) satisfying
\begin{equation}\label{eq:1}
d(b_n(x), b_n(y))\leq \frac{1}{n}\, \min_{\pi\in S_n}\sum_{i=1}^n d(x_i, y_{\pi(i)})
\end{equation}
for all \(n\geq 3\) and all \(x,y\in X^n\).
Given \(x\in X^n\),  for each \(k\geq 1\) we write \(Q^k(x)\in X^{kn}\) to denote \((x,\ldots, x)\). Descombes \cite[Theorem 2.5 (1)]{MR3563261} proved that the limit
\[b(x)\coloneqq\lim_{k\to+\infty} b_{nk}(Q^{k}(x))\]
exists for all \(x\in X^n\). Moreover, if \(x=(x_1, \ldots, x_n)\), then 
\begin{equation}\label{eq:2}
b(x)\in\overline{\conv}_{\tau}(\{x_1, \ldots, x_n\}).
\end{equation} 
By \eqref{eq:1}, \eqref{eq:2}, and Proposition \ref{prop:formula}, the map \(\beta\colon P_\Q(X)\to X\) given by
\[\mu=\frac{1}{n}\bigl( \delta_{x_1}+\dotsm+\delta_{x_n}\bigr)\mapsto \beta(\mu)\coloneqq b(x_1, \ldots, x_n)\]
is well-defined, \(\beta(\mu)\in \overline{\conv}_\tau(\spt(\mu))\), and \(d(\beta(\mu), \beta(\nu))\leq W_1(\mu, \nu)\) for all \(\mu, \nu \in P_{\Q}(X)\), where \(P_\Q(X)\subset P_1(X)\) denotes the set of all Radon probability measures on \(X\) with finite support and rational weights. The map \(\beta\colon P_\Q(X)\to X\) extends to a contracting barycenter map \(\beta\) on \(X\), for \(X\) is complete and \(P_{\Q}(X)\) is \(W_1\)-dense in \(P_1(X)\) (see \cite[Proposition 3.2]{MR3819996}). Now, it is easy to check that \(\beta_\sigma\coloneqq\beta\) has the desired properties. The theorem follows. 
\end{proof}

We remark that in view of Theorem~\ref{thm:lifting}, to prove Theorem~\ref{thm:contBary} it would suffice to consider the special case when \(X\) is an injective metric space. However, to prove this special case seems to be as difficult as proving the general case. 

\section{Extending conical bicombings}\label{sec:ladst}

\subsection{Consequences of the conical inequality} The following lemma shows that every reversible conical bicombing satisfies an inequality which is slightly stronger than \eqref{eq:conicalDEF}.

\begin{Lem}\label{lem:slightlyStronger}
Let \(X\) be a metric space, \(A\subset X\),  and \(\{\sigma_{xy}(\cdot) : x, y\in A\}\) a collection of geodesics \(\sigma_{xy}\colon [0,1]\to X\) such that \(\sigma_{xy}(0)=x, \,\sigma_{xy}(1)=y\), and \(\sigma_{xy}(t)=\sigma_{yx}(1-t)\) for all \(t\in [0,1]\) and all \(x,y\in A\). If
\begin{equation}\label{eq:test8} 
d(\sigma_{xy}(t), \sigma_{xz}(t)) \leq t\,d(y,z) 
\end{equation}
for all \(x,y,z\in A\) and all \(t\in [0,1]\), then
\[d(\sigma_{x_1 x_2}(t), \sigma_{y_1 y_2}(t)) \leq W_1\bigl( (1-t)\delta_{x_1}+t \delta_{x_2} , (1-t)\delta_{y_1}+t \delta_{y_2}\bigr)\] 
for all \(t\in [0,1]\) and all \(x_1, x_2, y_1, y_2\in A\).
\end{Lem}
\begin{proof}
Without loss of generality, we may suppose that \(t\in [1/2,1]\). We retain the notation from Lemma \ref{lem:wasser}.
For \(s=t\), one has \(I_{s,t}=[2t-1, t]\). Thus, by substituting \(\varepsilon\coloneqq t-\lambda\) in Lemma \ref{lem:wasser}, we obtain
\begin{align}\label{eq:express}
W_1\big( (1-t)\delta_{x_1}&+t \delta_{x_2}, (1-t)\delta_{y_1}+t \delta_{y_2}\big)  \\
=\min_{\varepsilon\in [0,1-t]}\Bigl[ &\varepsilon(d(x_1,y_2)+d(y_1,x_2)) + (t-\varepsilon)d(x_2, y_2)+((1-t)-\varepsilon)d(x_1, y_1) \Bigr]. \nonumber
\end{align}
On the one hand, we compute
\begin{align*}
d(\sigma_{x_1 x_2}(t), \sigma_{y_1 y_2}(t)) &\leq d(\sigma_{x_1 x_2}(t), \sigma_{x_1 y_2}(t)) \\
&\quad+d(\sigma_{y_2 x_1}(1-t), \sigma_{y_2 y_1}(1-t)), 
\end{align*}
and so, by the use of \eqref{eq:test8}, we get
\begin{equation}\label{eq:one}
d(\sigma_{x_1 x_2}(t), \sigma_{y_1 y_2}(t)) \leq (1-t) d(x_1, y_1)+t d(x_2, y_2),
\end{equation} 
but on the other hand, 
\begin{align*}
d(\sigma_{x_1 x_2}(t), \sigma_{y_1 y_2}(t)) &\leq d(\sigma_{x_2 x_1}(1-t), \sigma_{x_2 y_2}(1-t)) \\
&\quad+d(\sigma_{x_2 y_2}(1-t), \sigma_{x_2 y_2}(t)) +d(\sigma_{x_2 y_2}(t), \sigma_{y_1 y_2}(t)) 
\end{align*}
and therefore
\begin{align}\label{eq:two}
d(\sigma_{x_1 x_2}(t), \sigma_{y_1 y_2}(t)) &\leq (1-t) d(x_1,y_2)\\
&\quad+(2t-1) d(x_2, y_2)+(1-t)d(x_2, y_1). \nonumber
\end{align}
By combining \eqref{eq:express} with \eqref{eq:one} and \eqref{eq:two}, we find that
\[
d(\sigma_{x_1 x_2}(t), \sigma_{y_1 y_2}(t)) \leq W_1\big( (1-t)\delta_{x_1}+t \delta_{x_2} , (1-t)\delta_{y_1}+t \delta_{y_2}\big),
\]
as desired.
\end{proof}
Lemma \ref{lem:slightlyStronger} tells us that if \(\sigma\) is a reversible conical  bicombing on \(X\), then the map
\((1-t)\delta_x+t\delta_y\mapsto \sigma_{xy}(t)\) is \(1\)-Lipschitz with respect to the \(1\)-Wasserstein distance. This observation is the key idea behind the proof of Theorem \ref{thm:lifting}. 

\subsection{Partially defined barycenter maps}

In the following we will prove that any partially defined barycenter map \(\beta\colon M\to X\), where \(M\subset P_1(X)\), can be extended to a contracting barycenter map \(\widetilde{\beta}\colon P_1(E(X))\to E(X)\). The proof crucially relies on the following well-known property of the injective hull.
\begin{Lem}\label{lem:uniqueness-1}
Let \(X\) denote a metric space and \((E(X), i)\) its injective hull. If \(z\), \(z'\in E(X)\) satisfy \(d(z,i(x))=d(z',i(x))\) for all \(x\in X\), then \(z=z'\). 
\end{Lem}
\begin{proof}
Consider the metric space 
\[
\Delta_1(X)\coloneqq \Bigl\{ f\in \Lip_1(X) : f(x)+f(x') \geq d(x,x') \text{ for all }x,x'\in X \Bigr\}
\]
equipped with the supremum metric \(d_\infty\), that is, \(d_\infty(f,g)\coloneqq\norm{f-g}_\infty=\sup_{x\in X} \abs{f(x)-g(x)}\) for all \(f\), \(g\in \Delta_1(X)\). It is straightforward to show that \(\Delta_1(X)\) is an injective metric space (see, for example, \cite[Proposition 3.2]{MR3096307}). Moreover, for any \(f\in \Delta_1(X)\), one has that \(\norm{f-d_x}_\infty=f(x)\) for all \(x\in X\), where \(d_x\in \Delta_1(X)\) denotes the distance function from \(x\). Clearly, \(j\colon X\to \Delta_1(X)\) defined by \(x\mapsto d_x\) is an isometric embedding. Hence, by the definition of the injective hull, there exists an isometric embedding \(k\colon E(X) \to \Delta_1(X)\) such that \(i(x)\mapsto d_x\). By construction, \(k(z)(x)=\norm{k(z)-k(i(x))}_\infty=d(z,i(x))\) for all \(x\in X\). Therefore, by our assumptions on \(z\) and \(z'\), it follows that \(k(z)=k(z')\), but this is only possible if \(z=z'\), for \(k\) is an isometric embedding. This completes the proof.  
\end{proof}

We say that \(\beta \colon M\to X\) is a \textit{partially defined barycenter map} if \(M\subset P_1(X)\) contains \(\{\delta_x : x\in X\}\) and \(\beta\) is \(1\)-Lipschitz. 

\begin{Lem}\label{prop:divergent} Let \(X\) be a metric space and denote by \((E(X), i)\) its injective hull. Then for any  partially defined barycenter map \(\beta\colon M\to X\) there exists a contracting barycenter map \(\widetilde{\beta}\colon P_1(E(X))\to E(X)\) which extends \(\beta\), that is, \(\widetilde{\beta} (i_\#(\mu))=i(\beta(\mu))\) for all \(\mu\in M\).
\end{Lem}

\begin{proof}
The composition \(i\circ \beta\) is a \(1\)-Lipschitz map and the push-forward map \(i_\#\colon P_1(X)\to P_1(E(X))\) is an isometric embedding; see Lemma \ref{lem:isoW}. Therefore, as \(E(X)\) is an injective metric space, there exists a \(1\)-Lipschitz map \(\widetilde{\beta}\colon P_1(E(X))\to E(X)\) such that \(i(\beta(\mu))=\widetilde{\beta} (i_\#(\mu))\) for all \(\mu\in M\).

To finish the proof it remains to show that \(\widetilde{\beta}(\delta_z)=z\) for all \(z\in E(X)\). To this end, let \(f\colon E(X)\to E(X)\) be defined by \(z\mapsto \widetilde{\beta}(\delta_z)\). By construction, \(f\) is \(1\)-Lipschitz and \(f(i(x))=i(x)\) for all \(x\in X\) and thus \(f\circ i\) is an isometric embedding. Consequently, by the definition of the injective hull, \(f\) is an isometric embedding as well. In particular, \(d(f(z), i(x))=d(z, i(x))\) for all \(x\in X\). Hence, Lemma~\ref{lem:uniqueness-1} implies that \(f(z)=z\) for all \(z\in E(X)\). Since \(f(z)=\widetilde{\beta}(\delta_z)\) this gives the desired result.
\end{proof}

\subsection{Extensions to the injective hull}
Next, we prove Theorem~\ref{thm:lifting} from the introduction, which states that any reversible conical bicombing on a metric space \(X\) can be extended to a concial bicombing on \(E(X)\). The proof is a straightforward application of Lemmas \ref{lem:trivial1}, \ref{lem:slightlyStronger} and \ref{prop:divergent}.

\begin{proof}[Proof of Theorem \ref{thm:lifting}]
We put 
\[
M\coloneqq \bigl\{ (1-t)\delta_x+t\delta_y \colon x,y\in X,\, t\in [0,1]\bigr\}.
\]
Due to Lemma \ref{lem:slightlyStronger}, it follows that \(\beta\colon M\to X\) defined by \((1-t)\delta_x+t\delta_y\mapsto \sigma_{xy}(t)\) is 1-Lipschitz and thus it is a partially defined barycenter map. Therefore, by virtue of Lemma \ref{prop:divergent} there exists a contracting barycenter map \(\widetilde{\beta}\colon P_1(E(X))\to E(X)\) such that \(\widetilde{\beta}(i_\#(\mu))=i(\beta(\mu))\) for all \(\mu\in M\). Hence, \(\widetilde{\sigma}\colon E(X)\times E(X)\times [0,1]\to E(X)\) defined by \((x,y,t)\mapsto \widetilde{\beta}\bigl( (1-t)\delta_x+t \delta_y \bigr)\)
is a reversible conical bicombing; see Lemma \ref{lem:trivial1}. By construction, \(\widetilde{\sigma}_{i(x)i(y)}=i\circ\sigma_{xy}\) for all \(x\), \(y\in X\).  
\end{proof}

\subsection{Doss expectation}\label{sec:doss}
In what follows, we prove the following generalization of Corollary \ref{cor:uniqueBan}.
\begin{Prop}\label{prop:Unique}
Any normed real vector space admits only one conical bicombing. This unique conical bicombing is given by linear segments. 
\end{Prop}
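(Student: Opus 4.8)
The plan is to identify the value $\sigma_{xy}(t)$ of an arbitrary conical bicombing on a Banach space with a \emph{Doss expectation} of the measure $(1-t)\delta_x+t\delta_y$, and then to prove that in a Banach space this Doss expectation can only be the affine combination $(1-t)x+ty$. The starting observation is that if $\sigma$ is any conical bicombing on a Banach space $V$, then applying \eqref{eq:conicalDEF} to the geodesic $\sigma_{xy}$ and the constant geodesic $\sigma_{zz}\equiv z$ gives
\[\norm{\sigma_{xy}(t)-z}\le(1-t)\norm{x-z}+t\norm{y-z}\qquad\text{for every }z\in V,\]
while convexity of the norm shows that $\ell:=(1-t)x+ty$ satisfies the same inequalities. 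So the Proposition follows from the claim: \textbf{if $m\in V$ satisfies $\norm{m-z}\le(1-t)\norm{x-z}+t\norm{y-z}$ for all $z\in V$, then $m=\ell$} --- this forces $\sigma_{xy}(t)=\ell$, and conversely the affine bicombing $(x,y,t)\mapsto(1-t)x+ty$ is conical, so it is the unique one.

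To prove the claim I would first normalize and reduce the dimension. The cases $t\in\{0,1\}$ are trivial, so let $t\in(0,1)$; testing $z=x$ and $z=y$ and using the triangle inequality gives $\norm{m-x}=t\,\norm{x-y}$ and $\norm{m-y}=(1-t)\norm{x-y}$. Since $x-\ell$, $y-\ell$ and $m-\ell$ all lie in the linear span $Y$ of $x-y$ and $m-\ell$, and since restricting $z$ to the affine plane $\ell+Y$ only discards inequalities, we may replace $V$ by $\ell+Y$ and hence assume $\dim V\le 2$. In particular $V$ is finite-dimensional, so every functional attains its norm and the norm is differentiable on a dense subset of the unit sphere.

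Now fix a point $v$ of the unit sphere of $V$ at which the norm is smooth, with unique norming functional $g_v$, and feed the test points $z_c:=m+cv$ into the inequality for $m$, letting $c\to+\infty$. One has $\norm{m-z_c}=c$, and, since $\tfrac{d}{ds}\norm{v-sw}\big|_{s=0}=-g_v(w)$ at a smooth point, $\norm{x-z_c}-c\to g_v(m)-g_v(x)$ and $\norm{y-z_c}-c\to g_v(m)-g_v(y)$; subtracting $c$ from the inequality and passing to the limit yields $0\le g_v(m)-\bigl((1-t)g_v(x)+t\,g_v(y)\bigr)=g_v(m)-g_v(\ell)$. Running the same argument with $-v$ (which is smooth with norming functional $-g_v$) gives the reverse inequality, so $g_v(m)=g_v(\ell)$ for every smooth unit vector $v$.

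Finally, if $m\ne\ell$ set $w:=m-\ell$ and $u_0:=w/\norm{w}$, and choose smooth unit vectors $v_n\to u_0$; then $g_{v_n}(w)=\norm{w}\,g_{v_n}(u_0)\ge\norm{w}\bigl(1-\norm{v_n-u_0}\bigr)\to\norm{w}>0$, contradicting $g_{v_n}(w)=g_{v_n}(m)-g_{v_n}(\ell)=0$; hence $m=\ell$. The heart of the argument, and the place where the Banach-space hypothesis is genuinely used, is this two-point Doss uniqueness; the reduction to $\dim V\le2$ is what keeps the functional-analytic input elementary, since it sidesteps worries about non-norm-attaining functionals and scarcity of smooth points. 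An alternative to the dimension reduction is to work coordinatewise after a linear isometric embedding $V\hookrightarrow\ell_\infty(I)$, but passing to the plane spanned by $x-y$ and $m-\ell$ seems the shortest path.
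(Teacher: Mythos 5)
Your argument is correct, and it takes a genuinely different route from the paper. The paper also starts from the observation that \(\sigma_{xy}(t)\) must lie in the Doss expectation \(\mathbb{E}_D[(1-t)\delta_x+t\delta_y]\), but it then proves this set is a singleton \emph{indirectly}: it pushes the Doss expectation into the injective hull \(E(X)\) via Lemma \ref{prop:divergent}, invokes Isbell's theorem that \(E(X)\) carries a compatible Banach space structure, and uses Lemma \ref{lem:Doss1} together with Corollary \ref{cor:uniqueBan} (which itself rests on Theorem \ref{thm:lifting} and the result of \cite{MR3940917} that conical bicombings on injective Banach spaces are locally linear) to conclude that the Doss expectation upstairs, hence downstairs, is a point. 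You instead prove directly that in \emph{any} Banach space the two-point Doss expectation equals \(\{(1-t)x+ty\}\): after reducing to the at most two-dimensional subspace spanned by \(x-y\) and \(m-\ell\), you test against \(z=m+cv\) for smooth unit vectors \(v\), let \(c\to\infty\) to get \(g_v(m-\ell)=0\), and use density of smooth points on the sphere to force \(m=\ell\). All steps check out (the asymptotic \(\norm{x-z_c}=c-g_v(x-m)+o(1)\) is the correct first-order expansion at a point of Gâteaux differentiability, and smooth points are indeed dense on the unit sphere of a finite-dimensional space). What your approach buys is self-containment and strength: it avoids the injective hull, Isbell's linearization, and the external input from \cite{MR3940917}, it establishes the sharper statement that \(\mathbb{E}_D[(1-t)\delta_x+t\delta_y]\) is always \(\{(1-t)x+ty\}\) in a Banach space, and it subsumes Corollary \ref{cor:uniqueBan} (and hence the Mazur--Ulam application) without any appeal to Theorem \ref{thm:lifting}. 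What the paper's route buys is thematic coherence --- it showcases the extension-to-the-injective-hull machinery that is the point of the article, and the containment \eqref{eq:cont1} is a reusable byproduct.
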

To establish Proposition \ref{prop:Unique} we consider the Doss expectation of a measure.
Let \(X\) be a metric space. For each \(\mu\in P_1(X)\) the set
\[\mathbb{E}_D[\mu]\coloneqq\bigl\{ z\in X : d(z,x)\leq W_1(\mu, \delta_x) \textrm{ for all } x\in X \bigr\}\]
is called \textit{Doss expectation} of \(\mu\). See \cite[Section 2.3.]{MR2132405} for other notions of expectation in metric spaces. Notice that 
\(\sigma_{xy}(t)\in \mathbb{E}_D[(1-t)\delta_x +t \delta_y]\)
for all \(x\), \(y\in X\) and all \(t\in [0,1]\) whenever \(\sigma\) is a conical bicombing on \(X\). Conversely, if \(X\) is an injective metric space, then the map \(\sigma\mapsto \sigma_{xy}(t)\in \mathbb{E}_D[(1-t)\delta_x +t \delta_y]\) is surjective. 

\begin{Lem}\label{lem:Doss1}
Let \(X\) be an injective metric space. Then for all \(x\), \(y\in X\) and all \(t\in [0,1]\) the following holds. For every \(z\in \mathbb{E}_D[(1-t)\delta_x +t \delta_y]\) there exists a reversible conical bicombing \(\sigma\) on \(X\) such that \(\sigma_{xy}(t)=z\). In particular, if \(X\) admits only one reversible conical bicombing, then \(\mathbb{E}_D[(1-t)\delta_x +t \delta_y]\) is a singleton. 
\end{Lem}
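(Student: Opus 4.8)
The plan is to reduce the statement to the construction of a suitable contracting barycenter map. By Lemma~\ref{lem:trivial1}, any contracting barycenter map $\beta\colon P_1(X)\to X$ induces a reversible conical bicombing $\sigma_\beta$ with $\sigma_\beta(x,y,s)=\beta\bigl((1-s)\delta_x+s\delta_y\bigr)$ for all $(x,y,s)$. Hence it suffices to produce a contracting barycenter map $\beta$ on $X$ with $\beta(m)=z$, where $m:=(1-t)\delta_x+t\delta_y\in P_1(X)$; then $\sigma:=\sigma_\beta$ will satisfy $\sigma_{xy}(t)=\beta(m)=z$, which is the first assertion.

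To build $\beta$, I would set $S:=\{\delta_w : w\in X\}\cup\{m\}\subset P_1(X)$ and define $\beta_0\colon S\to X$ by $\beta_0(\delta_w):=w$ and $\beta_0(m):=z$. First one checks that this is well defined: $\beta_0$ can be ambiguous only if $m=\delta_w$ for some $w\in X$, i.e.\ only if $x=y$ or $t\in\{0,1\}$; in each of these cases $m$ equals a Dirac mass $\delta_x$ (or $\delta_y$), and the membership $z\in\mathbb{E}_D[m]$ forces $d(z,x)\le W_1(\delta_x,\delta_x)=0$ (resp.\ $d(z,y)=0$), so the two prescribed values coincide. Next one checks that $\beta_0$ is $1$-Lipschitz: on a pair of Dirac masses this is the equality $d(w,w')=W_1(\delta_w,\delta_{w'})$, and on a pair $(\delta_w,m)$ one needs $d(z,w)\le W_1(m,\delta_w)$; since $W_1(m,\delta_w)=(1-t)\,d(x,w)+t\,d(y,w)$ (read off directly, or from Lemma~\ref{lem:wasser}), this is exactly the condition defining $z\in\mathbb{E}_D[m]$. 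Because $X$ is injective and $S\subset P_1(X)$, the $1$-Lipschitz map $\beta_0$ extends to a $1$-Lipschitz map $\beta\colon P_1(X)\to X$; since every Dirac mass already lies in $S$, automatically $\beta(\delta_w)=w$ for all $w\in X$, so $\beta$ is a contracting barycenter map, and $\beta(m)=z$ by construction. (Alternatively, one may invoke Lemma~\ref{prop:divergent}, using that $X$ injective means $E(X)$ may be identified with $X$.) Combined with Lemma~\ref{lem:trivial1} this yields the reversible conical bicombing $\sigma$ with $\sigma_{xy}(t)=z$.

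For the final assertion, I would note that $\mathbb{E}_D[m]$ is always nonempty, since $\sigma_{xy}(t)\in\mathbb{E}_D[m]$ for every conical bicombing $\sigma$ on $X$. If $X$ admits a unique reversible conical bicombing $\sigma$, then for every $z\in\mathbb{E}_D[m]$ the bicombing produced above must equal $\sigma$, forcing $z=\sigma_{xy}(t)$; hence $\mathbb{E}_D[m]=\{\sigma_{xy}(t)\}$.

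I do not anticipate a genuine obstacle: the argument is a short application of the injectivity of $X$ to the inclusion $S\subset P_1(X)$. The only points requiring (minor) care are the well-definedness of $\beta_0$ in the degenerate cases $x=y$ and $t\in\{0,1\}$, and the observation that the extension automatically preserves the normalization $\beta(\delta_w)=w$ because the Dirac masses already belong to $S$.
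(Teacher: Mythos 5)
Your proposal is correct and follows essentially the same route as the paper: define the partial map on $S=\{\delta_w : w\in X\}\cup\{(1-t)\delta_x+t\delta_y\}$ sending Dirac masses to their base points and the two-point measure to $z$, observe that the Doss condition is exactly the $1$-Lipschitz condition for this map, extend by injectivity of $X$ to a contracting barycenter map, and apply Lemma~\ref{lem:trivial1}. Your extra checks (well-definedness in the degenerate cases and the explicit computation $W_1(m,\delta_w)=(1-t)d(x,w)+t\,d(y,w)$) are details the paper leaves implicit.
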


\begin{proof}
Fix \(z\in\mathbb{E}_D[(1-t)\delta_x +t \delta_y]\), abbreviate \(\mu\coloneqq(1-t)\delta_x +t \delta_y\) and put \(M\coloneqq\{ \delta_x : x\in X\}\cup \{\mu\}\). The map \(f\colon M\to X\) defined by \(\delta_x\mapsto x\) and \(\mu\mapsto z\) is a partially defined barycenter map. Thus, as \(X\) is injective, there exists a contracting barycenter map \(\beta\colon P_1(X)\to X\) such that \(\beta(\mu)=f(\mu)\) for all \(\mu\in M\). Let \(\sigma_\beta\) be defined as in Lemma \ref{lem:trivial1}. 
By construction, \(\sigma_\beta(x,y,t)=z\). Since \(\sigma_\beta\) is a reversible conical bicombing, the lemma follows. 
\end{proof}

We proceed by proving Proposition \ref{prop:Unique}. 
\begin{proof}[Proof of Proposition \ref{prop:Unique}]
Let \(V\) be a normed vector space over \(\R\). It suffices to show that \(\mathbb{E}_D[(1-t)\delta_x+t \delta_y]\) is a singleton for all \((x,y,t)\in V\times V\times [0,1]\). Let \((E(V), i)\) denote the injective hull of \(V\) and fix \((x,y,t)\in V\times V \times [0,1]\). By the use of Lemma \ref{prop:divergent}, it is not hard to check that
\begin{equation}\label{eq:cont1}
i(\mathbb{E}_D[(1-t)\delta_x+t \delta_y])\subset \mathbb{E}_D[(1-t)\delta_{i(x)}+t \delta_{i(y)}].
\end{equation}
Since \(V\) is a normed real vector space, a result due to Isbell \cite[Theorem 1]{MR184061} (see also \cite[Theorem 2.1]{MR1177160}), tells us that there exists a Banach space structure on \(E(V)\) such that its norm induces the metric of \(E(V)\). Hence, from Corollary \ref{cor:uniqueBan} and Lemma \ref{lem:Doss1}, it follows that \(\mathbb{E}_D[(1-t)\delta_{i(x)}+t \delta_{i(y)}]\) is a singleton. By \eqref{eq:cont1}, \(\mathbb{E}_D[(1-t)\delta_x+t \delta_y]\) is a singleton as well, as desired. 
\end{proof}

It seems natural to ask if Proposition~\ref{prop:Unique} can be generalized. For example, one may ask if any closed convex subset of a Banach space admits a unique conical bicombing. However, we show in the following example that already certain convex subsets of \(\ell_2^\infty\) admit two distinct conical bicombings (and thus infinitely many).
This gives a negative answer to Question~1.6 of \cite{MR3940917}. 
\begin{Ex}\label{ex:counterexample}
We consider the Banach space  \(\ell_\infty^2\coloneqq(\R^2, \norm{\cdot}_{\infty})\), where \(\norm{\cdot}_{\infty}\) denotes the supremum norm. 
We put \(H\coloneqq\{ (s,t)\in \R^2 : t \geq 0\}\subset \ell_\infty^2.\)
In what follows, we show that \(H\) admits two distinct conical bicombings. Let \(\pi_i\colon H\to \R\), for \(i=1,2\), denote the projection onto the \(i\)th coordinate axis. A straightforward computation shows that 
\(\beta\colon P_1(H)\to H\) defined by \(\mu\mapsto (\beta_1(\mu), \beta_2(\mu))\), where
\[\beta_i(\mu)\coloneqq\inf_{p\in H} \bigl( \pi_i(p)+W_1(\delta_p, \mu)\bigr),\]
is a contracting barycenter map.
We set \(p_1\coloneqq(-1,0), \, p_2\coloneqq(1,0)\), and \(\mu\coloneqq\tfrac{1}{2}\delta_{p_1}+\tfrac{1}{2}\delta_{p_2}\). We claim that \(\beta(\mu)=(0,1)\). Clearly,
\[d_\mu\coloneqq\min_{p\in H} W_1(\delta_p, \mu) \leq \beta_2(\mu).\] 
Notice that if \(p\in H\) satisfies \(W_1(p, \mu)=d_\mu\), then \(W_1(r(p), \mu)=d_\mu\), where \(r\colon H\to H\) is the reflection about the \(y\)-axis. Thus, every point \(q\) on the linear segment \([p, r(p)]\) satisfies \(W_1(\delta_q, \mu)=d_\mu\). Hence, there exists \(u\in H\) such that \(\pi_1(u)=0\) and \(W_1(\delta_{u}, \mu)=d_\mu\). Consequently, \(1\leq W_1(\delta_{u}, \mu)=d_\mu \leq \beta_2(\mu).\)
Since \[\norm{\beta(\mu)-p_1}_{\infty}=\norm{\beta(\mu)-p_2}_{\infty}=\frac{1}{2}\norm{p_1-p_2}_{\infty}\text{,}\] 
we obtain \(\beta_2(\mu)=1\) and thus \(\beta(\mu)=(0,1)\), as claimed. 
The map \(\sigma\colon H\times H\times [0,1]\to H\) defined by \((p,q,t)\mapsto \beta\bigl((1-t)\delta_p+t \delta_q\bigr)\) is a reversible conical bicombing on \(H\); see Lemma \ref{lem:trivial1}.
Let \(\lambda\) denote the conical bicombing on \(H\) given by linear segments. By construction, \(\sigma(p_1, p_2, \tfrac{1}{2})=(0,1)\) and \(\lambda(p_1, p_2, \tfrac{1}{2})=(0,0)\). Hence, we infer \(\sigma\neq \lambda\) and thus \(H\) admits two distinct conical bicombings, as desired. 
\end{Ex}

\section{Conical  bicombings as fixed points}\label{sec:FIXED}

\subsection{Conical  bicombings on \(\CBI(X)\)} Let \(X\) be a metric space and let \(\CBI(X)\) denote the set of all conical  bicombings on \(X\). In the following, we show that \(\CBI(X)\) can endowed with a metric such that the resulting metric space admits a conical bicombing whenever \(X\) does. 
Given \(o\in X\) let \(D_{o}\colon\CBI(X) \times\CBI(X) \to \R\) be defined by 
\begin{equation*}
D_o(\sigma, \tau)\coloneqq \sup\Bigl\{  \, 3^{-k} d(\sigma_{xy}(t), \tau_{xy}(t))  : k\geq 0, \, x,y\in B_{2^k}(o), \, t\in [0,1] \Bigr\}.
\end{equation*}
Clearly, \(D_o\) is a metric on \(\CBI(X)\). We have defined \(D_o\) in such a way that for proper metric spaces \(X\)  the induced topology on \(\CBI(X)\) coincides with the topology \(\mathcal{T}_K\) of uniform convergence on compact sets; see Lemma~\ref{lem:compact}. This will be important in Section~\ref{sec:constr}, where fixed point arguments on \(\CBI(X)\) are employed to construct bicombings which satisfy certain consistency conditions.

The following lemma shows that each conical bicombing on \(X\) induces a conical bicombing on \((\CBI(X), D_{o})\).

\begin{Lem}\label{lem:C-biBicombing}
Let \(X\) be a metric space and fix \(o\in X\). If \(\phi\) is a conical bicombing on \(X\), then for all \(\sigma\), \(\tau\in \CBI(X)\), the map 
\(\Phi_{\sigma\tau}\colon [0,1]\to \CBI(X)\) defined by
\begin{equation*}
t\mapsto \Biggl\{
\begin{aligned} 
\Phi_{\sigma\tau}(t)\colon X\times X\times [0,1] &\to X \\
(x,y,s)  &\mapsto \phi(\sigma_{xy}(s), \tau_{xy}(s), t)
\end{aligned}
\end{equation*}
is a geodesic in \((\CBI(X), D_o)\) connecting \(\sigma\) to \(\tau\). Moreover, \(\Phi\colon \CBI{(X)}\times\CBI{(X)}\times [0,1]\to \CBI{(X)}\) defined by
\((\sigma, \tau, t)\mapsto \Phi_{\sigma\tau}(t)\)
is a conical bicombing on \((\CBI(X), D_o)\). 
\end{Lem} 

\begin{proof}
Fix \(\sigma, \tau\in\CBI{(X)}\) and \(t\in [0,1]\). Letting \(\upsilon\coloneqq \Phi_{\sigma\tau}(t)\) and 
using that \(\phi\) satisfies \eqref{eq:conicalDEF}, we obtain 
\begin{equation}\label{eq:main-ineq}
d(\upsilon_{xy}(s), \upsilon_{x'y'}(s'))\leq (1-t) d(\sigma_{xy}(s), \sigma_{x'y'}(s'))+td(\tau_{xy}(s), \tau_{x'y'}(s'))
\end{equation}
for all \(x\), \(y\), \(x'\), \(y'\in X\) and all \(s\), \(s'\in [0,1]\). In particular, \(d(\upsilon_{xy}(s),\upsilon_{xy}(s'))\leq \abs{s-s'} d(x,y)\). Now, exactly the same argument as in the proof of Lemma~\ref{lem:trivial1} shows that \(\upsilon\) defines a bicombing on \(X\). Moreover, since the biombings \(\sigma\) and \(\tau\) are conical, it follows immediately from \eqref{eq:main-ineq} that \(\upsilon\) is conical was well. As a result, \(\Phi_{\sigma\tau}\colon [0,1]\to \CBI(X)\) is well-defined. Next, we show that it is a geodesic. Since \(\phi\) is a bicombing, we have
\[
d(\Phi_{\sigma\tau}(t)(x,y,s),\Phi_{\sigma\tau}(t')(x,y,s))=\abs{t-t'} d(\sigma_{xy}(s), \tau_{xy}(s))
\]
for all \(x\), \(y\in X\) and all \(s\in [0,1]\). Hence, \(D_o(\Phi_{\sigma\tau}(t), \Phi_{\sigma\tau}(t'))=\abs{t-t'}D_o(\sigma, \tau)\) for all \(t\), \(t'\in [0,1]\), as desired.
To finish the proof we need to show that \(\Phi\) is conical. Notice that
\[
d(\Phi_{\sigma\tau}(t)(x,y,s),\Phi_{\sigma'\tau'}(t)(x,y,s))\leq (1-t) d(\sigma_{xy}(s), \sigma'_{xy}(s))+td(\tau_{xy}(s), \tau'_{xy}(s))
\]
for all \(x\), \(y\in X\) and all \(s\in [0,1]\). Consequently, \(D_o(\Phi_{\sigma\tau}(t), \Phi_{\sigma'\tau'}(t))\leq (1-t) D_o(\sigma, \sigma')+t D_o(\tau, \tau')\), as was to be shown. 
\end{proof}

Clearly, \(\CBI(X)\) also admits conical bicombings for other choices of metrics. 
We do not know if there exists a conical bicombing on \((\CBI(X), D_o)\) which is not equal to \(\Phi\) for any bicombing \(\phi\) on \(X\). In other words, we do not know whether \(\phi\mapsto \Phi\) defines a surjective map from \(\CBI(X)\) to \(\CBI(\CBI(X))\). We conclude this subsection with the following straightforward result which states that \(\CBI(X)\) is compact whenever \(X\) is proper. 

\begin{Lem}\label{lem:compact}
Let \(X\) is a proper metric space. Then \(D_o\) induces the topology of uniform convergence on compact sets. In particular, \((\CBI(X), D_{o})\) is a compact metric space for all \(o\in X\). 
\end{Lem} 
\begin{proof}
Let \(K\subset X\) be a compact subset, \(\sigma\in \CBI(X)\) and \(\varepsilon >0\). We put 
\[
U(K, \sigma, \varepsilon)\coloneqq \Bigl\{\tau\in \CBI(X) : \sup_{x,y\in K} \norm{\sigma_{xy}-\tau_{xy}}_\infty <\varepsilon \Bigr\}.
\]
There exists \(k\geq 0\) such that \(K\subset B_{2^{k}}(o)\), and so \( U_{D_o}(\sigma, 3^{-k} \varepsilon) \subset U(K, \sigma, \varepsilon)\), where \(U_{D_o}(\sigma, 3^{-k} \varepsilon)\) denotes the open ball with respect to \(D_o\) with center \(\sigma\) and radius \(3^{-k} \varepsilon\). The sets \(U(K, \sigma, \varepsilon)\) form a basis of the topology \(\mathcal{T}_K\). Hence, by the above \(\mathcal{T}_{D_o}\subset \mathcal{T}_K\). Next, we show the other direction. Let \(\sigma\in \CBI(X)\) and \(\varepsilon >0\) be given. Choose \(k_0\geq 0\) such that \((\tfrac{2}{3})^{k_0} < \varepsilon\). We put \(K\coloneqq B_{2^{k_0}}(o)\). Notice that for all \(k\geq k_0\),
\[
\sup_{x,y\in B_{2^k}(o)} 3^{-k}\,\norm{\sigma_{xy}-\tau_{xy}}_\infty \leq \sup_{x,y\in B_{2^k}(o)} 3^{-k}\cdot 2^{k} < \varepsilon.
\]
Hence, \(U(K, \sigma, \varepsilon)\subset U_{D_o}(\sigma, \varepsilon)\), and as a result, \(\mathcal{T}_K\subset \mathcal{T}_{D_o}\), as desired. Since \(\mathcal{T}_{D_o}=\mathcal{T}_K\), it follows immediately from the Arzelà-Ascoli theorem that \((\CBI(X), D_o)\) is compact metric space for all \(o\in X\).
\end{proof}

\subsection {A fixed point result and its applications} The following proposition is due Kijima \cite{MR881533}. It can be proved by slightly adapting the proof of a well-known result from the fixed point theory of Banach spaces by Mitchell \cite{MR267414}. 

\begin{Prop}[Theorem 1 of \cite{MR881533}]\label{lem:fixed}
Let \(X\) be a compact metric space admitting a conical  bicombing. If \(S\) is a left reversible semigroup consisting of 1-Lipschitz self-maps of \(X\), then \(S\) has a common fixed point in \(X\), that is, there is \(x_\ast\in X\) such that \(f(x_\ast)=x_\ast\) for all \(f\in S\).  
\end{Prop}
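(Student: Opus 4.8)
The plan is to adapt Mitchell's fixed-point argument for left reversible semigroups acting on a compact convex set in a Banach space, replacing linear convex combinations by the conical bicombing $\sigma$. First I would fix a conical bicombing $\sigma$ on $X$ and introduce, for each finite tuple of points, a ``center of mass'' type object. The cleanest route is to invoke Theorem \ref{thm:contBary}: since $X$ is compact, it is in particular complete, so the conical bicombing $\sigma$ yields a contracting barycenter map $\beta\colon P_1(X)\to X$ with $\beta(\mu)\in\overline{\conv}_\sigma(\spt(\mu))$. Compactness of $X$ makes $P_1(X)=P(X)$ compact in the $W_1$-topology, and $\beta$ is $1$-Lipschitz, hence continuous. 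This barycenter map is the substitute for the vector-space average.

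Next I would set up the usual ``minimal invariant set'' / almost-fixed-point scheme. For a single $1$-Lipschitz map $f\colon X\to X$ on a compact space with a conical bicombing one already gets a fixed point (iterate Cesàro-type averages $x_n=\beta(\tfrac1n\sum_{k=0}^{n-1}\delta_{f^k(x_0)})$; any subsequential limit $x_\ast$ satisfies $d(f(x_\ast),x_\ast)\le \liminf \tfrac1n d(f^n(x_0),x_0)=0$ using $1$-Lipschitzness of $\beta$ and $W_1(\tfrac1n\sum_{k=1}^n\delta_{f^k(x_0)},\tfrac1n\sum_{k=0}^{n-1}\delta_{f^k(x_0)})\le \tfrac1n d(f^n(x_0),x_0)$). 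The content of the proposition is the \emph{simultaneous} fixed point for a whole left reversible semigroup $S$. Here I would consider the nonempty compact set $\mathcal F$ of all nonempty closed $S$-invariant subsets of $X$, ordered by inclusion; by Zorn's lemma (using compactness to pass to intersections of chains) there is a minimal nonempty closed $S$-invariant set $K\subset X$. The goal is to show $K$ is a single point.

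The key step — and the main obstacle — is showing minimality forces $\diam(K)=0$. Suppose not. Left reversibility of $S$ means any two principal right ideals meet: $fS\cap gS\neq\varnothing$ for all $f,g\in S$, equivalently the sets $\overline{fK}$, $f\in S$, form a downward-directed family of nonempty closed subsets of $K$; their intersection $K_0$ is nonempty, closed, and $S$-invariant, hence $K_0=K$ by minimality, so each $f\in S$ maps $K$ \emph{onto} (a dense subset of, hence by compactness onto) $K$. Now fix any $x_0\in K$ and, for a point $c$ built as a barycenter $\beta(\mu)$ of a suitable $S$-averaged measure supported on $K$ (using the directedness to make $\mu$ nearly invariant under every $f\in S$), observe $c\in\overline{\conv}_\sigma(K)$; one then shows the orbit closure of $c$ is $S$-invariant and, by the contraction estimate for $\beta$ together with near-invariance of $\mu$, has diameter strictly smaller than $\diam K$, contradicting minimality. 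Concretely, for $f\in S$ one bounds $d(f(c),c)=d(\beta(f_\sharp\mu),\beta(\mu))\le W_1(f_\sharp\mu,\mu)$, and the directedness of $\{fS\}$ lets one choose $\mu$ with this quantity uniformly small; a standard argument (as in Mitchell) then produces a proper closed $S$-invariant subset. This last diameter-shrinking estimate, making the bicombing/barycenter play the role of convex averaging in Mitchell's original proof, is the delicate part; everything else is the routine Zorn's-lemma and compactness bookkeeping.
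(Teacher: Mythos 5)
The paper does not actually prove this proposition: it cites Kijima's Theorem~1 and remarks that the proof is a straightforward adaptation of Mitchell's argument for left reversible semigroups of nonexpansive maps on compact convex subsets of Banach spaces. Your sketch correctly identifies that adaptation as the target, but as written it contains a genuine error in the central step. You take \(K\) minimal among nonempty closed \(S\)-invariant subsets of \(X\) and aim to show \(\diam(K)=0\). That is false: let \(X\) be the closed unit disk in the Euclidean plane (compact, convex, hence carrying a conical bicombing) and \(S=\{R^n:n\ge 1\}\) for an irrational rotation \(R\); this is an abelian, hence left reversible, semigroup of isometries, and the unit circle is a minimal nonempty closed \(S\)-invariant set of positive diameter. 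The minimality must be taken among nonempty closed \emph{\(\sigma\)-convex} \(S\)-invariant sets (Zorn plus compactness still applies). Relatedly, your proposed contradiction does not close: the barycenter \(c=\beta(\mu)\) of a measure supported on \(K\) lies in \(\overline{\conv}_\sigma(K)\), which in general leaves \(K\), so an \(S\)-invariant set of small diameter built from the orbit of \(c\) is not a subset of \(K\) and does not contradict minimality of \(K\). The step you yourself flag as ``the delicate part'' is exactly the content of Mitchell's and Kijima's proofs --- one shows that a minimal closed convex invariant \(K\) with \(\diam K>0\) admits a nondiametral point (normal structure, which compact \(\sigma\)-convex sets do enjoy via midpoints \(\sigma_{xy}(\tfrac12)\) and a Brodskii--Milman type argument), and that the corresponding Chebyshev-center set \(\{x\in K: d(x,y)\le r \text{ for all } y\in K_0\}\), with \(K_0=\bigcap_{f\in S}\overline{f(K)}\), is a nonempty, closed, \(\sigma\)-convex (by the conical inequality with \(y=y'\)), \(S\)-invariant, \emph{proper} subset of \(K\) --- and it is omitted rather than executed.

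Two smaller points. The detour through Theorem~\ref{thm:contBary} is unnecessary and introduces its own gap: your single-map warm-up needs \(f(\beta(\mu))=\beta(f_\sharp\mu)\), but contracting barycenter maps are not equivariant under \(1\)-Lipschitz maps, so \(d(f(x_n),x_n)\) is not controlled by \(W_1(f_\sharp\mu_n,\mu_n)\) alone. (A correct single-map argument: \(f_t(x):=\sigma(x_0,f(x),t)\) is a \(t\)-contraction for \(t<1\), its fixed point \(x_t\) satisfies \(d(x_t,f(x_t))\le(1-t)\diam X\), and one lets \(t\to 1\).) On the positive side, your observations that left reversibility makes \(\{\overline{f(K)}\}_{f\in S}\) downward directed, that \(K_0\) is nonempty, closed and \(S\)-invariant, and that minimality then forces each \(f\) to map \(K\) onto \(K\), are correct and are indeed ingredients of the Mitchell--Kijima argument.
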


Here, a semigroup \(S\) is \textit{left reversible} if for all \(a, b \in S\), there exist \(c, d\in S\) such that \(ac=bd\). For instance, every group and every abelian semigroup is left reversible. By the use of Lemmas \ref{lem:C-biBicombing} and \ref{lem:compact}, and Proposition \ref{lem:fixed}, certain results of \cite{MR3563261}, \cite{MR3096307} may be derived via straightforward fixed points arguments. For example:

\begin{Lem}\label{lem:reversible}
Let \(X\) be a proper metric space admitting a conical bicombing \(\phi\). Then \(X\) also admits a reversible conical bicombing. Moreover, the subset \(\RCBI(X)\subset \CBI(X)\)  of all reversible conical bicombings on \(X\) is closed and \(\Phi\)-convex. 
\end{Lem}

\begin{proof}
We define \(r\colon \CBI(X) \to \CBI(X)\) by
\[
\sigma\mapsto \Biggl\{
\begin{aligned}
r(\sigma)\colon X\times X\times [0,1]&\to X \\
(x,y,t)&\mapsto \phi(\sigma_{xy}(t), \sigma_{yx}(1-t), \tfrac{1}{2}).
\end{aligned}
\]
Fix \(o\in X\). It is easily seen that \(r\) is \(1\)-Lipschitz with respect to \(D_{o}\). Since  \((\CBI(X), D_o)\) is a compact metric space with a conical  bicombing, see Lemmas \ref{lem:C-biBicombing} and \ref{lem:compact}, it follows from Proposition \ref{lem:fixed} that there exists \(\sigma_\ast\in \CBI(X)\) such that \(r(\sigma_\ast)=\sigma_\ast\). By construction, \(\sigma_\ast\) is reversible. Next, we show that \(\RCBI(X)\) is \(\Phi\)-convex, where \(\Phi\) is defined as in Lemma~\ref{lem:C-biBicombing}.
If \(\sigma\), \(\tau\in \RCBI(X)\), then 
\[\phi(\sigma_{xy}(s), \tau_{xy}(s), t)=\phi(\sigma_{yx}(1-s), \tau_{yx}(1-s), t)\]
and thus \(\Phi_{\sigma\tau}(t)(x,y,s)=\Phi_{\sigma\tau}(t)(y,x,1-s)\) for all \((x,y,s)\in X\times X \times [0,1]\). Hence, \(\RCBI(X)\) is \(\Phi\)-convex.
To finish the proof we need to show that \(\RCBI(X)\) is closed. Let \((\sigma^{(n)})\) be a sequence of reversible conical bicombings converging to \(\sigma\in \CBI(X)\) as \(n\to \infty\). 
Fix \((x,y,t)\in X\times X\times [0,1]\). Since each \(\sigma^{(n)}\) is reversible, it follows that
\begin{align*}
d(\sigma_{xy}(t), \sigma_{yx}(1-t)) &\leq d(\sigma_{xy}(t), \sigma_{xy}^{(n)}(t))\\
&+d(\sigma_{yx}^{(n)}(1-t),\sigma_{yx}(1-t)).
\end{align*}
Choose \(k_0\geq 1\) such that \(x,y\in B_{2^{k_0}}(o)\). By the above,
\[d(\sigma_{xy}(t), \sigma_{yx}(1-t)) \leq 2 \cdot 3^{k_0} \cdot D_{o}(\sigma, \sigma^{(n)})\] for all \(n\geq 1\). This implies that \(\sigma\) is reversible, as desired. 
\end{proof}

A bicombing \(\sigma\) is called \textit{\(\Iso(X)\)-equivariant}
if  \(f(\sigma(x,y,t))=\sigma( f(x), f(y), t)\) for every isometry \(f\colon X\to X\), all \(x\), \(y\in X\) and all \(t\in [0,1]\). In \cite[Proposition 3.8]{MR3096307}, Lang proved that every injective metric space \(X\) admits an \(\Iso(X)\)-equivariant reversible conical bicombing. Using the moduli space \(\CBI(X)\) and Proposition~\ref{lem:fixed}, we can show that the analogous result also holds for every compact metric space which admits a conical bicombing. This seems to be of independent interest and does not follow from Lang's result.

\begin{Lem}\label{lem:IsoInv}
Let \(X\) be a compact metric space. If \(X\) admits a conical  bicombing, then \(X\) also admits an \(\Iso(X)\)-equivariant reversible conical  bicombing. 
\end{Lem}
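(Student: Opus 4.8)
The plan is to combine the fixed-point machinery just developed with an averaging construction over the (compact) isometry group. First I would recall that by Proposition \ref{lem:fixed}, Lemma \ref{lem:compact}, and Lemma \ref{lem:C-biBicombing}, whenever a compact metric space $Y$ admits a conical bicombing, any left reversible semigroup of $1$-Lipschitz self-maps of $Y$ has a common fixed point. The space I want to apply this to is $Y:=(\RCBI(X), D_o)$ for a suitably chosen basepoint $o$: by Lemma \ref{lem:reversible} (or by Lemmas \ref{lem:compact} and \ref{lem:C-biBicombing}) this is a non-empty compact metric space, and by Lemma \ref{lem:C-biBicombing} it carries a conical bicombing $\Phi$ restricted to the $\Phi$-convex subset $\RCBI(X)$, so it is itself a compact space with a conical bicombing.

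Next I would let $G:=\Iso(X)$, which is compact since $X$ is compact, and observe that $G$ acts on $\RCBI(X)$ by $g\cdot\tau := (x,y,t)\mapsto g(\tau_{g^{-1}x\,g^{-1}y}(t))$; each such map is an isometry of $(\RCBI(X),D_o)$ provided the basepoint $o$ is chosen $G$-invariantly — for instance, to avoid worrying about a $G$-fixed point in $X$, I would instead use the metric $D$ on $\RCBI(X)$ obtained by averaging $D_o$ over $o$ in a finite or compact symmetric set, or simply note that the topology of uniform convergence on compact sets is $G$-invariant and the fixed-point proposition only depends on that topology together with the conical bicombing. The group $G$ is a group, hence left reversible, and it acts by $1$-Lipschitz (indeed isometric) maps, so Proposition \ref{lem:fixed} yields a common fixed point $s\in\RCBI(X)$, i.e. a reversible conical bicombing with $g(s_{xy}(t))=s_{g(x)g(y)}(t)$ for all $g\in G$ — precisely $\Iso(X)$-equivariance.

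The step I expect to be the main obstacle is making the $G$-action act by honest isometries of a fixed metric on $\RCBI(X)$ so that Proposition \ref{lem:fixed} applies verbatim: the metric $D_o$ depends on the basepoint $o$, and $\Iso(X)$ need not fix any point of $X$. I would resolve this either by replacing $D_o$ with a $G$-invariant metric inducing the same (compact-open) topology — e.g. $D(\sigma,\tau):=\sup_k 3^{-k}\sup\{d(\sigma_{xy}(t),\tau_{xy}(t)): x,y\in B_k,\ t\in[0,1]\}$ using a $G$-invariant exhaustion $B_k$ of $X$, which exists because one may take $B_k$ to be the closed $k$-ball around a point and then its $G$-saturation, all of which have compact closure — or, more cleanly, by checking that the proof of Proposition \ref{lem:fixed} (being a fixed-point argument for $1$-Lipschitz maps on a compact space with a conical bicombing) goes through for any group of homeomorphisms that happen to be $1$-Lipschitz for \emph{some} compatible metric, which the $G$-action is, since each $g$ is $D_o$-Lipschitz with a constant bounded on the compact group $G$ and one can then symmetrize. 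Once the action is by $1$-Lipschitz maps the rest is immediate, so I would state the basepoint-independence as a short preliminary remark and then invoke Proposition \ref{lem:fixed} to conclude.
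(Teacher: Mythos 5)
Your overall strategy---realize the desired bicombing as a common fixed point of the $\Iso(X)$-action on the compact space $\RCBI(X)$ equipped with the conical bicombing from Lemma \ref{lem:C-biBicombing}, via Proposition \ref{lem:fixed}---is exactly the paper's. The only divergence is in how you make the action $1$-Lipschitz for the metric on $\RCBI(X)$, and here the paper has a cleaner trick that you explicitly brush past: since $X$ itself is a compact metric space with a conical bicombing and $\Iso(X)$ is a group of $1$-Lipschitz self-maps, Proposition \ref{lem:fixed} applied once to $X$ already produces a point $o\in X$ fixed by every isometry; with that choice of basepoint each map $\tau\mapsto f^{-1}\circ\tau\circ(f\times f\times\mathrm{id})$ preserves the balls $B_{2^k}(o)$ and is therefore an isometric embedding of $(\CBI(X),D_o)$ (hence, by compactness and the classical result on nonexpansive surjectivity, an isometry), so the proposition applies verbatim to $(\RCBI(X),D_o)$. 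Your substitute arguments can be made to work but need more care than you give them: averaging $D_o$ over the compact group $\Iso(X)$ does yield an invariant metric inducing the same topology and for which $\Phi$ is still conical (the conical inequality is linear in the metric), but your other suggestion---that Proposition \ref{lem:fixed} ``only depends on the topology together with the conical bicombing''---is not a legitimate reading of that proposition, which genuinely requires the maps to be $1$-Lipschitz for the very metric in which the bicombing is conical. So: same proof in essence; just replace the invariant-metric gymnastics by one more application of the fixed-point theorem, to $X$ itself, to obtain an $\Iso(X)$-fixed basepoint.
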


\begin{proof}
Due to Proposition \ref{lem:fixed} there exists \(o\in X\) such that \(f(o)=o\) for every isometry \(f\) of \(X\).  For each isometry \(f\colon X\to X\) the map \(F\colon \CBI(X) \to \CBI(X)\) defined by
\begin{equation*}
\sigma\mapsto \Biggl\{
\begin{aligned}
F(\sigma)\colon X\times X\times [0,1]&\to X \\
(x,y,t)&\mapsto f^{-1}( \sigma( f(x), f(y), t))
\end{aligned}
\end{equation*}
is an isometric embedding with respect to \(D_o\) and \(\RCBI(X)\) is \(F\)-invariant. Since \((\CBI(X), D_o)\) and \((\RCBI(X), D_o)\) are compact metric spaces, see Lemma~\ref{lem:compact} and \ref{lem:reversible}, a classical result \cite{freudenthal1936dehnungen} tells us that the maps \(F\) and \(F|_{\RCBI(X)}\) are isometries. Because of Lemma \ref{lem:reversible}, the compact metric space \((\RCBI(X), D_o)\) admits a conical  bicombing. Hence, by virtue of Proposition \ref{lem:fixed} there exists \(\sigma_\ast\in \RCBI(X)\) such that \(F(\sigma_\ast)=\sigma_\ast\) for every map \(F\) defined as above. By construction, \(\sigma_\ast\) is an \(\Iso(X)\)-equivariant reversible conical  bicombing on \(X\).
\end{proof}

\section{Constructing new conical bicombings from old ones}\label{sec:constr}
\subsection{Preparatory lemmas}
Let \(X\) denote a metric space admitting a conical bicombing \(\sigma\). 
In what follows, we develop tools that allow us to construct new bicombings starting from \(\sigma\). Fix \(n\geq 1\) and \(\tau\in \CBI(X)\). For all \(x,y\in X\) we set \(\upsilon_{xy}(n; 0)\coloneqq x\), \(\upsilon_{xy}(n;n)\coloneqq y\), and
\begin{equation}\label{eq:defM}
\upsilon_{xy}(n; i)\coloneqq \sigma\bigl(\tau_{xy}\big(\tfrac{i-1}{n}\big),\tau_{xy}\big(\tfrac{i+1}{n}\big), \tfrac{1}{2}\bigr)
\end{equation}
for all \(i=1, \dots, n-1\). Let \(\upsilon_\sigma(n; \tau)\) denote the map \(X\times X\times [0,1]\to X\) defined by 
\begin{equation}\label{eq;defQ}
(x,y, (1-\lambda)\tfrac{i}{n}+\lambda\tfrac{i+1}{n})\mapsto \sigma(\upsilon_{xy}(n; i),\upsilon_{xy}(n; i+1), \lambda) 
\end{equation}
for all \(i=1, \dots, n-1\) and all \(\lambda\in [0,1]\).

\begin{Lem}\label{lem:basicLemma}
The map \(\upsilon_\sigma(n; \tau)\) is a conical bicombing, and if \(\sigma\) is consistent, then \(\upsilon_\sigma(n; \sigma)=\sigma\).
\end{Lem}
\begin{proof}
We abbreviate \(\upsilon\coloneqq\upsilon_\sigma(n; \tau)\) and \(t_i\coloneqq\frac{i}{n}\) for \(i=0, \ldots, n\). 
Using \eqref{eq:defM}, we obtain
\begin{equation}\label{eq:goe}
\begin{split}
&d(x, \upsilon_{xy}(n; i)) \leq \frac{t_{i-1}}2\, d(x,y)+\frac{t_{i+1}}{2} \,d(x,y), \\
&d(y, \upsilon_{xy}(n; i)) \leq \frac{1-t_{i-1}}{2}\, d(x,y)+ \frac{1-t_{i+1}}{2} \, d(x,y).
\end{split}
\end{equation}
As \(d(x,y) \leq d(x, \upsilon_{xy}(n; i))+d(\upsilon_{xy}(n; i),y)=d(x,y)\), the inequalities in \(\eqref{eq:goe}\) are equalities. 
Since
\[
d(\upsilon_{xy}(n; i), \upsilon_{xy}(n; i+1))\leq \frac{1}{n}\, d(x,y),
\]
we obtain \(d(\upsilon_{xy}(n; i), \upsilon_{xy}(n; j))=\abs{t_i-t_j}d(x,y)\) for all \(i, j=0, \dots, n\). Thus, \(\upsilon\) is a bicombing. 

We proceed to show that \(\upsilon\) satisfies \eqref{eq:conicalDEF}. Let \(t\in \bigl[t_i, t_{i+1}\bigr]\). Clearly, \(t=(1-\lambda) t_i+\lambda t_{i+1}\) for some \(\lambda\in [0,1]\).
Let \(x,y,z\in X\). We estimate
\begin{align}\label{eq:ONE}
d(\upsilon_{xy}(t), \upsilon_{xz}(t)) &\leq(1-\lambda) d(\upsilon_{xy}(n; i), \upsilon_{xz}(n; i)) \nonumber \\
&+\lambda \,d(\upsilon_{xy}(n; i+1), \upsilon_{xz}(n; i+1)).
\end{align}
By virtue of \eqref{eq:defM}, it follows that
\begin{align}\label{eq:TWO}
d(\upsilon_{xy}(n; j), \upsilon_{xz}(n; j)) &\leq \frac{1}{2} \,d(\tau_{xy}(t_{j-1}), \tau_{xz}(t_{j-1}))+ \frac{1}{2} \,d(\tau_{xy}(t_{j+1}), \tau_{xz}(t_{j+1})) \nonumber\\
&\leq t_j\,d(y,z) 
\end{align}
for all \(j=0, \dots, n\). By combining \eqref{eq:ONE} and \eqref{eq:TWO}, we find that \(\upsilon\) satisfies \eqref{eq:conicalDEF}, as desired. For the moreover part, it suffices to note that if \(\sigma\) is consistent, then \(\upsilon_{xy}(n; i)=\sigma_{xy}(t_i)\) for all \(i=0, \ldots, n\). 
\end{proof}

Now we are in a position to prove Lemma  \ref{lem:main1}, which is the main component of the proof of Theorem \ref{thm:consi}.
\begin{Lem}\label{lem:main1}
Let \(X\) be a proper metric space and suppose \(\sigma\) is a conical bicombing on \(X\). Let \(x,y\in X\).
Then for each integer \(n\geq 1\) there exist unique points \(\sigma_{xy}(n; i),  \textrm{ for } i=0, \ldots, n,\)
such that \(\sigma_{xy}(n; 0)=x\), \(\sigma_{xy}(n; n)=y\), and
\begin{equation}\label{eq:empty}
\sigma_{xy}(n; i)=\sigma\bigl( \sigma_{xy}(n; i-1), \sigma_{xy}(n; i+1), \tfrac{1}{2}\bigr)
\end{equation}
for all \(i=1, \dots, n-1\). Moreover, \(\sigma^{(n)}\colon X\times X\times [0,1]\to X\) defined by
\begin{equation*}
\sigma^{(n)}\bigl(x,y,(1-\lambda)\tfrac{i}{n}+\lambda\tfrac{i+1}{n}\bigr)\coloneqq\sigma(\sigma_{xy}(n; i), \sigma_{xy}(n; i+1), \lambda)
\end{equation*}
for all \(x,y\in X\), all \(\lambda\in [0,1]\), and all \(i=0, \dots, n-1\), is a conical  bicombing.  
\end{Lem}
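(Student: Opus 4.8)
The plan is to produce $\sigma^{(n)}$ directly as a fixed point of the operator $\tau\mapsto c_\sigma(n;\tau)$ on the moduli space $\CBI(X)$, using Kijima's fixed-point theorem, and then to read off the points $\sigma_{xy}(n;i)$ together with the recursion \eqref{eq:empty} from the fixed-point identity; uniqueness of these points is then a separate, short argument via a discrete maximum principle.

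First I would check that the map $C\colon \CBI(X)\to\CBI(X)$, $\tau\mapsto c_\sigma(n;\tau)$ — which is well defined by Lemma \ref{lem:basicLemma} — is $1$-Lipschitz with respect to $D_o$. Fix $k\ge 0$, points $x,y\in B_{2^k}(o)$, and $s=(1-\lambda)\tfrac in+\lambda\tfrac{i+1}n$. By \eqref{eq;defQ} and the conical inequality \eqref{eq:conicalDEF} for $\sigma$, the distance $d\bigl(c_\sigma(n;\tau)_{xy}(s),c_\sigma(n;\tau')_{xy}(s)\bigr)$ is bounded by $(1-\lambda)\,d(c_{xy}(n;i),c'_{xy}(n;i))+\lambda\,d(c_{xy}(n;i+1),c'_{xy}(n;i+1))$, and by \eqref{eq:defM} each node distance $d(c_{xy}(n;m),c'_{xy}(n;m))$ is at most $\tfrac12 d\bigl(\tau_{xy}(\tfrac{m-1}n),\tau'_{xy}(\tfrac{m-1}n)\bigr)+\tfrac12 d\bigl(\tau_{xy}(\tfrac{m+1}n),\tau'_{xy}(\tfrac{m+1}n)\bigr)\le 3^kD_o(\tau,\tau')$ (the endpoint cases $m=0,n$ being trivial). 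Hence $3^{-k}d\bigl(c_\sigma(n;\tau)_{xy}(s),c_\sigma(n;\tau')_{xy}(s)\bigr)\le D_o(\tau,\tau')$, so $C$ is nonexpansive. By Lemmas \ref{lem:compact} and \ref{lem:C-biBicombing}, $(\CBI(X),D_o)$ is a non-empty compact metric space admitting a conical bicombing. Applying Proposition \ref{lem:fixed} to the abelian — hence left reversible — semigroup $\{C^{k}:k\ge 1\}$ of $1$-Lipschitz self-maps of $\CBI(X)$ yields a conical bicombing $\sigma^{(n)}\in\CBI(X)$ with $c_\sigma(n;\sigma^{(n)})=\sigma^{(n)}$.

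Next I would unwind this identity. Evaluating $c_\sigma(n;\sigma^{(n)})=\sigma^{(n)}$ at the parameters $s=\tfrac in$ and using \eqref{eq:defM}--\eqref{eq;defQ} shows that the points $\sigma_{xy}(n;i):=\sigma^{(n)}_{xy}(\tfrac in)$ satisfy $\sigma_{xy}(n;0)=x$, $\sigma_{xy}(n;n)=y$, and $\sigma_{xy}(n;i)=\sigma\bigl(\sigma_{xy}(n;i-1),\sigma_{xy}(n;i+1),\tfrac12\bigr)$ for $1\le i\le n-1$, which is \eqref{eq:empty}; and, again by $c_\sigma(n;\sigma^{(n)})=\sigma^{(n)}$, the bicombing $\sigma^{(n)}$ is exactly the map assembled from these points by the displayed formula, so the ``moreover'' part holds. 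For uniqueness, suppose $(p_i)_{i=0}^{n}$ and $(q_i)_{i=0}^{n}$ both satisfy $p_0=q_0=x$, $p_n=q_n=y$, and the recursion \eqref{eq:empty}. Put $e_i:=d(p_i,q_i)$; then $e_0=e_n=0$, and by \eqref{eq:conicalDEF} with $t=\tfrac12$ we get $e_i\le\tfrac12(e_{i-1}+e_{i+1})$ for $1\le i\le n-1$, i.e.\ $i\mapsto e_i$ is convex on $\{0,\dots,n\}$. A convex function on $\{0,\dots,n\}$ lies below the affine interpolant of its boundary values, so $e_i\le\tfrac{n-i}{n}e_0+\tfrac in e_n=0$; since $e_i\ge 0$ this forces $p_i=q_i$ for all $i$.

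The main obstacle is the existence of the fixed point: the naive iteration $\tau,C\tau,C^{2}\tau,\dots$ need not converge because $C$ is only nonexpansive, so properness of $X$ is used essentially — through compactness of $(\CBI(X),D_o)$ in Lemma \ref{lem:compact} — together with the fixed-point theorem for nonexpansive maps on compact spaces carrying a conical bicombing (Proposition \ref{lem:fixed}). The remaining ingredients, namely the $1$-Lipschitz estimate for $C$ and the discrete maximum principle underlying uniqueness, are routine.
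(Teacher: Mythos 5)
Your proposal is correct and follows essentially the same route as the paper: existence via a fixed point of the $1$-Lipschitz operator $\tau\mapsto c_\sigma(n;\tau)$ on the compact space $(\CBI(X),D_o)$ using Proposition \ref{lem:fixed}, with the same Lipschitz estimate through the node points, and uniqueness via the same discrete maximum principle $e_i\le\tfrac12(e_{i-1}+e_{i+1})$ with vanishing boundary values. The only differences are cosmetic (you invoke the semigroup $\{C^k\}$ explicitly and phrase uniqueness via the affine interpolant rather than the paper's telescoping inequality).
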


\begin{proof}
To begin, we prove that the points \(\sigma_{xy}(n; i)\) are unique. Suppose that \(p_0,  \dots, p_n\in X\) satisfy \(p_0=x\), \(p_n=y\) and \(p_i=\sigma(p_{i-1}, p_{i+1}, \tfrac{1}{2})\) for all \(i=1, \dots, n-1\). We abbreviate \(d_i\coloneqq d(\sigma_{xy}(n; i), p_i)\) and \(d\coloneqq\max\{ d_i : i=0, \ldots, n \}\). Plainly, 
\[d_i\leq \frac{1}{2}d_{i-1}+\frac{1}{2}d_{i+1}\leq \frac{1}{4} d_{i-2}+\frac{1}{4} d_i+\frac{1}{2}d_{i+1} \leq \dots \leq\frac{1}{2^i} d_0+\big(1-\frac{1}{2^i}\big) d,\]
for all \(i=1, \dots, n-1\). Hence, \(d=0\), as desired. 

In the following, we show that the points \(\sigma_{xy}(n; i)\) with the desired properties exist. 
For \(\tau\in \CBI(X)\) let \(\upsilon_\sigma(n; \tau)\) be defined as in \eqref{eq;defQ}.
By Lemma \ref{lem:basicLemma}, it follows that \(\upsilon_\sigma(n; \tau)\) is a conical  bicombing. Fix \(o \in X\). By construction, \(\tau\mapsto \upsilon_\sigma(n; \tau)\) is 1-Lipschitz with respect to \(D_o\). Indeed, 
\begin{align*}
D_o(\upsilon_\sigma(n; \tau), \upsilon_\sigma(n; \tau^\prime)) \leq \sup\bigl\{ 3^{-k}\, d(\tau_{xy}(\tfrac{i}{n}), \tau^\prime_{xy}(\tfrac{i}{n})) : k\geq 0,\, x,y\in B_{2^k}(o), \, i\in [n] \bigr\}
\end{align*}
where \([n]\coloneqq\{0, \ldots, n\}\), and therefore
\[
D_o(\upsilon_\sigma(n; \tau), \upsilon_\sigma(n; \tau^\prime)) \leq D_o(\tau, \tau^\prime)
\]
for all \(\tau, \tau^\prime\in \CBI(X)\). Since \((\CBI(X), D_o)\) is compact (see Lemma \ref{lem:compact}), Proposition \ref{lem:fixed} now gives us some \(\sigma_\ast\in \CBI(X)\) for which \(\upsilon_\sigma(n; \sigma_\ast)=\sigma_\ast\). Hence, the points \(\sigma_{xy}(n; i)\coloneqq\sigma_\ast(x,y, \tfrac{i}{n})\) have the desired properties. 
\end{proof}

Rather than using the tools of Section \ref{sec:FIXED}, Lemma~\ref{lem:main1} can also be established by direct  computations. Indeed, straightforward (but tedious) estimates show that the sequence \((x_k)\subset X^{n+1}\) with \(x_0\in X^{n+1}\) arbitrary, 
\[x_k^{(0)}\coloneqq x, \,\, x_k^{(n)}\coloneqq y, \, \text{  and  } \, x_k^{(i)}\coloneqq\sigma( x_{k-1}^{(i-1)}, x_{k-1}^{(i+1)}, \tfrac{1}{2})\] 
is convergent. Its limit fulfils \eqref{eq:empty} by construction. 

\subsection{Proof of Theorem \ref{thm:consi}}
Now, we have everything at hand to prove Theorem \ref{thm:consi}.
\begin{proof}[Proof of Theorem \ref{thm:consi}] Fix a reversible conical  bicombing \(\sigma\) on \(X\).
The existence of such a bicombing is guaranteed by Lemma \ref{lem:reversible}. For each \(n\geq 1\) let \(\sigma^{(n)}\colon X\times X\times [0,1]\to X\) denote the conical  bicombing constructed in Lemma \ref{lem:main1}. Notice that \(\sigma^{(1)}=\sigma\). Since the points \(\sigma_{xy}(n; i), i=0, \ldots, n,\) are unique, we find that
\begin{equation}\label{eq:ruler}
\sigma^{(n)}_{xy}\bigl((1-\lambda)\tfrac{i}{n}+\lambda\tfrac{i+k}{n}\bigr)=\sigma^{(k)}\bigl(\sigma^{(n)}_{xy}\big(\tfrac{i}{n}\big),\sigma^{(n)}_{xy}\big(\tfrac{i+k}{n}\big), \lambda\bigr)
\end{equation}
for all \(0\leq i \leq i+k \leq n\) and all \(\lambda\in [0,1]\).
We define \(\gamma^{(n)}\colon X\times X\times [0,1]\to X\) by setting \(\gamma^{(n)}(x,x,t)\coloneqq x\) for all \(x\in X\) and all \(t\in [0,1]\), and
\begin{equation*}
\gamma^{(n)}(x,y,t)\coloneqq\sigma^{(i)}(x,y,t) \quad \quad \textrm{ if } d(x,y)\in \bigl(\frac{i-1}{n}, \frac{i}{n}\bigr]
\end{equation*}
Clearly, \(\gamma^{(n)}\) is a  bicombing. We remark that \(\gamma^{(n)}\) is not necessarily continuous with respect to the product topology on \(X\times X \times [0,1]\). 

Fix a free ultrafilter \(\mathfrak{U}\) on the positive integers (we refer to \cite[p. 78]{MR1744486} for the definition and basic properties of ultrafilters). Now, let \(\gamma\colon X\times X \times [0,1]\to X\) be defined by
\begin{equation*}
\gamma(x,y,t)\coloneqq\lim_{\mathfrak{U}} \gamma^{(n)}(x,y,t).
\end{equation*}
By construction, \(\gamma\) is a reversible bicombing. In the following, we show that \(\gamma\) has the desired properties. First, we prove that \(\gamma\) is consistent. 
To this end, let \(x,y\in X\) with \(x\neq y\), and \(p\coloneqq\gamma_{xy}(s)\) and \(q\coloneqq\gamma_{xy}(t)\) for \(0 \leq s < t \leq 1\) be given. By construction, 
\[
d\bigl(\gamma_{xy}((1-\lambda)s+\lambda t), \gamma_{pq}(\lambda)\bigr)=\lim_{\mathfrak{U}} d\bigl(\gamma^{(n)}_{xy}((1-\lambda)s+\lambda t), \gamma^{(n)}_{pq}(\lambda)\bigr).
\]
Letting \(p_n\coloneqq\gamma^{(n)}_{xy}(s)\) and \(q_n\coloneqq\gamma^{(n)}_{xy}(t)\), we obtain that
\begin{align*}
d(\gamma_{xy}((1-\lambda)s+\lambda t), \gamma_{pq}(\lambda)) &\leq \lim_{\mathfrak{U}}d\bigl(\gamma^{(n)}_{xy}((1-\lambda)s+\lambda t), \gamma^{(n)}_{p_nq_n}(\lambda)\bigr) \\
&+\lim_{\mathfrak{U}} d\bigl(\gamma^{(n)}_{p_nq_n}(\lambda), \gamma^{(n)}_{pq}(\lambda)\bigr).
\end{align*}
Notice that \(d(p_n, q_n)=d(p_m, q_m)\) for all \(n,m\geq 1\), and so \(d(p,q)=d(p_n, q_n)\) for all \(n\geq 1\). Hence, using the definition of \(\gamma^{(n)}\), we obtain
\begin{equation*}
\begin{split}
\lim_{\mathfrak{U}} d\bigl(\gamma^{(n)}_{p_nq_n}(\lambda), \gamma^{(n)}_{pq}(\lambda)\bigr) &\leq (1-\lambda)\lim_{\mathfrak{U}}d(p_n,p)+\lambda\lim_{\mathfrak{U}}d(q_n,q) \leq 0.
\end{split}
\end{equation*}
Thus, to prove that \(\gamma\) is consistent, it suffices to show that
\begin{equation}\label{eq:goal1}
\lim_{\mathfrak{U}}d\bigl(\gamma^{(n)}_{xy}((1-\lambda)s+\lambda t), \gamma^{(n)}_{p_nq_n}(\lambda)\bigr)=0.
\end{equation}
Fix \(n\geq 1\) and denote by \(m,k\geq 1\) the unique integers such that 
\[d(x,y)\in \bigl(\frac{m-1}{n}, \frac{m}{n}\bigr], \quad \quad d(p_n,q_n)\in \bigl(\frac{k-1}{n}, \frac{k}{n}\bigr].\]
We set \(A\coloneqq\big\{\sigma^{(m)}_{xy}(\frac{i}{m}) : i=0, \dots, m \big\}\). Choose \(p'_n\coloneqq\sigma^{(m)}_{xy}(\frac{i}{m})\in A\) such that \(d(p_n, p'_n)+d(q_n,q'_n)\leq d(x,y)/m \leq 1/n\),
where \(q'_n\coloneqq\sigma^{(m)}_{xy}(\frac{i+k}{m})\). We estimate
\[
d\bigl(\gamma_{p_nq_n}^{(n)}(\lambda), \sigma_{p'_nq'_n}^{(k)}(\lambda)\bigr)=d\bigl(\sigma_{p_nq_n}^{(k)}(\lambda), \sigma_{p'_nq'_n}^{(k)}(\lambda)\bigr)\leq \frac{1}{n}
\]
and therefore
\[
d\bigl(\gamma^{(n)}_{xy}((1-\lambda)s+\lambda t), \gamma^{(n)}_{p_nq_n}(\lambda)\bigr)\leq d\bigl(\sigma^{(m)}_{xy}((1-\lambda)s+\lambda t), \sigma_{p'_nq'_n}^{(k)}(\lambda)\bigr)+\frac{1}{n}.
\]
By virtue of \eqref{eq:ruler}, we arrive at
\begin{equation*}\label{eq:consistency1}
d\bigl(\gamma^{(n)}_{xy}((1-\lambda)s+\lambda t), \gamma^{(n)}_{p_nq_n}(\lambda)\bigr)\leq \frac{2}{n};
\end{equation*}
hence, \eqref{eq:goal1} follows. Thus, \(\gamma\) is a consistent bicombing, as claimed. Since every geodesic of \(\gamma^{(n)}\) is a \(\sigma^{(i)}\)-geodesic for some \(i\geq 1\), each map \(\gamma_{xy}(\cdot)\) is a straight geodesic. Similarly, since \(\gamma\) is consistent and by the definition of \(\gamma^{(n)}\), we find that \(t\mapsto d(\gamma_{xy}(t), \gamma_{x^\prime y^\prime}(t))\) is convex on \([0,1]\) whenever \(x\), \(y\), \(x^{\prime}\), \(y^{\prime}\in X\) satisfy \(d(x,y)=d(x^\prime, y^\prime)\). Notice that if \(\sigma\) is \(\Iso(X)\)-equivariant, then each \(\sigma^{(n)}\) is \(\Iso(X)\)-equivariant, and it follows that \(\gamma\) is \(\Iso(X)\)-equivariant as well. Due to Lemma~\ref{lem:IsoInv} and \cite[Proposition 3.8]{MR3096307}, we may suppose that \(\sigma\) is \(\Iso(X)\)-equivariant whenever \(X\) is compact or injective. This completes the proof. 
\end{proof}

\begin{Expl}
Let \((\sigma^{(n)})\) be the sequence of conical bicombings as in the proof of Theorem \ref{thm:consi}.  If \((\sigma^{(n)})\) converges,
then it is immediate that the limit is a consistent conical bicombing on \(X\). 
Unfortunately, standard estimates only show that \[D_o(\sigma^{(n)}, \sigma^{(n+1)}) \leq  \frac{1}{n+1}.\] 
We do not know if the sequence \((\sigma^{(n)})\) is convergent. 
\end{Expl}

\section{Boundary constructions}

\subsection{Boundaries at infinity}\label{sec:construction}

Throughout this subsection, let \(X\) be a complete metric space and \(\gamma\) a consistent bicombing on \(X\) 
such that \(t\mapsto d(\gamma_{xy}(t), \gamma_{x' y'}(t))\) is convex on \([0,1]\) for all \(x\), \(y\), \(x'\), \(y'\in X\) with \(d(x,y)=d(x', y')\).  If \(X\) is proper and admits a conical bicombing then the existence of such a bicombing \(\gamma\) is guaranteed by Theorem~~\ref{thm:consi}. Moreover, every Busemann space clearly admits such a bicombing. The aim of this section is the adapt the boundary construction for Busemann spaces which is given in \cite{MR1425125} to this more general setting. For consistent conical bicombings this has already been carried out in \cite{MR3370039}. We caution the reader that it is not known whether \(\gamma\) satisfies the conical inequality \eqref{eq:conicalDEF}. Hence, in the following arguments we do not have \eqref{eq:conicalDEF} at our disposal. 

Given \(o\), \(x\in X\) we define \(\rho_{ox}\colon \R_+\to X\) as follows. If \(t\geq d(o,x)\) then we set \(\rho_{ox}(t)=x\)  and \(\rho_{ox}(t)=\gamma_{ox}\bigl(t/d(o,x)\bigr)\) otherwise. We will often use the following elementary estimate as a substitute for \eqref{eq:conicalDEF}.

\begin{Lem}\label{lem:technical-1}
Let \(o\), \(x\), \(y\in X\) be distinct. Then
\begin{equation}\label{eq:estimate-222}
d(\rho_{ox}(t), \rho_{oy}(t)) \leq 2 t \cdot \frac{d(x,y)}{\min\bigl\{d(o,x), d(o,y)\bigr\}}
\end{equation}
for all \(t\in \R_+\) satisfying \(t \leq \min\bigl\{d(o,x), d(o,y)\bigr\}\).
\end{Lem}

\begin{proof}
We may suppose that \(d(o,x)\leq d(o,y)\). We put \(y'\coloneqq \rho_{oy}\bigl(d(o,x)\bigr)\). Since \(\rho_{oy}(t)=\rho_{oy'}(t)\), it follows that \(d(\rho_{ox}(t), \rho_{oy}(t)) \leq \frac{t}{d(o,x)} d(x,y')\). 
By construction, \(d(y,y')=\abs{d(y,o)-d(x,o)}\leq d(x,y)\), and so the desired result follows by the triangle inequality. 
\end{proof}

Recall that a ray \(\xi\colon \R_+\to X\) is called \(\gamma\)-ray if \(\xi((1-\lambda) s +\lambda t)=\gamma(\xi(s), \xi(t), \lambda)\) for all \(0 \leq s \leq t\) and all \(\lambda\in [0,1]\). For each \(o\in X\) we let \((\partial_\gamma X)_o\) denote the set of all \(\gamma\)-rays issuing from \(o\). The following lemma shows that \((\partial_\gamma X)_o\) and \((\partial_\gamma X)_p\) are bijectively equivalent. Having Lemma~\ref{lem:technical-1} at hand, it can be proven by slightly adapting the arguments from \cite{MR1425125}.

\begin{Lem}\label{ref:unique-rep}
Let \(p\), \(o\in X\). Then for every \(\xi\in (\partial_\gamma X)_p\) there exists a unique \(\xi'\in (\partial_\gamma X)_o\) such that \(\xi\) and \(\xi'\) are asymptotic.
\end{Lem}

\begin{proof}
The uniqueness part follows directly from the fact that \(t\mapsto d(\xi(t), \xi^\prime(t))\) is convex for all \(\gamma\)-rays \(\xi\) and \(\xi^\prime\). 
Let \(\xi\in (\partial_\gamma X)_p\) and define \(x_n\coloneqq \xi(n)\) for all \(n\geq 1\). Fix \(t\in \R_+\). We claim that the sequence \(\big(\rho_{o x_n}(t)\big)\) converges. 
Suppose that \(N\geq t+3d(o,p)\) and let \(n\geq m \geq N\). Notice that \(x_m=\rho_{x_n p}(d(x_n, x_m))\) and 
\[
d(x_n, o)\geq d(x_n, p)-d(o,p)\geq d(x_n, x_m)+N-d(o, p)\geq d(x_n, x_m).
\]
 Hence, \(z\coloneqq \rho_{x_n o}(d(x_n, x_m))\) is well-defined. Using Lemma~\ref{lem:technical-1}, we obtain \(d(x_m, z) \leq 2 d(o, p)\), and so 
\[
d(z,o)\geq d(o, x_m)-d(x_m, z)\geq N-3d(o,p)\geq t.
\]
This implies that \(\rho_{o x_n}(t)=\rho_{oz}(t)\) and thus by applying Lemma~\ref{lem:technical-1},
\begin{equation}\label{eq:technical-2}
d(\rho_{o x_m}(t), \rho_{o x_n}(t))\leq 2t\cdot \frac{d(x_m, z)}{\min\{d(o, x_m), d(o, z)\}} \leq 4 t \cdot\frac{d(o, p)}{N- 3 d(o, p)}.
\end{equation}
Therefore, \(\big(\rho_{o x_n}(t)\big)\) is a Cauchy sequence, as desired. Letting
\[
\xi^\prime(t)=\lim_{n\to \infty} \rho_{o x_n}(t),
\] 
it follows that \(\xi^\prime\) is a \(\gamma\)-ray emanating from \(o\). It remains to show that \(\xi\) and \(\xi^\prime\) are asymptotic. Fix \(t\in \R_+\). For \(n\geq 1\) sufficiently large, \(\xi(t)=\rho_{p x_n}(t)\) and \(t'\geq 0\), where \(t'\coloneqq \min\{d(p, x_n), d(o, x_n)\}-t\). Clearly, 
\[
d(\xi(t), \rho_{o x_n}(t))\leq d(\rho_{p x_n}(t), \rho_{x_n p}(t'))+d(\rho_{x_n p}(t'), \rho_{x_n o}(t'))+d(\rho_{x_n o}(t'), \rho_{o x_n}(t))
\]
and thus, \(d(\xi(t), \rho_{o x_n}(t)) \leq d(o, p)+d(\rho_{x_n p}(t'), \rho_{x_n o}(t'))\). Hence, by invoking Lemma~\ref{lem:technical-1}, it follows that \(d(\xi(t), \rho_{o x_n}(t)) \leq 3 d(o, p)\) for \(n\geq 1\) sufficiently large. This gives \(d(\xi(t), \xi^\prime(t)) \leq 3 d(o,p)\), as was to be shown. 
\end{proof}

Let \(\partial_\gamma X\) denote the set of equivalence classes of mutually asymptotic \(\gamma\)-rays and set \(\overline{X}_\gamma \coloneqq X \cup \partial_\gamma X\). Now, we are ready to define a family of metrics \((\bar{d}_o)_{o\in X}\) on \(\overline{X}_\gamma\). By Lemma~\ref{ref:unique-rep}, it follows that for every pair \((o, \bar{x})\in X\times\partial_\gamma X\) there exists a unique \(\rho_{o \bar{x}}\in (\partial_\gamma X)_o\) such that \([\rho_{o \bar{x}}]=\bar{x}\). By the triangle inequality, \(d(\rho_{o \bar{x}}(t), \rho_{o \bar{y}}(t))\leq 2 t\) for all \(\bar{x}\), \(\bar{y}\in \overline{X}_\gamma\). Hence, \(\bar{d}_o\colon \overline{X}_\gamma \times \overline{X}_\gamma \to \R\) given by 
\[
\bar{d}_o(\bar{x}, \bar{y})=\int_{0}^{\infty} d(\rho_{o \bar{x}}(t), \rho_{o \bar{y}}(t)) e^{-t}\, dt
\]
satisfies \(\bar{d}_o(\bar{x}, \bar{y}) \leq 2\) and defines a metric on \(\overline{X}_\gamma\).

\begin{Lem}\label{lem:topology}
Let \(o\), \(p\in X\). Then \(\bar{d}_o\) and \(\bar{d}_p\) induce the same topology on \(\overline{X}_\gamma\).
\end{Lem}

\begin{proof}
For every \(x\in X\) there exists some \(\varepsilon_x>0\) such that the open ball \(\{ \bar{x}\in \overline{X}_\gamma : \bar{d}_p(x, \bar{x}) < \varepsilon_x\}\) is contained in \(X\). In particular, \(X\) is an open subset of \(\overline{X}_\gamma\) with respect to any metric \(\bar{d}_p\). One can easily show that the induced topology of \(X\subset \overline{X}_\gamma\) is equal to the metric topology of \(X\).

Suppose now that \(\bar{x}\in \partial_\gamma X\) and  \(\bar{x}_n\in\overline{X}_\gamma\), \(n\geq 1\), are such that \(\bar{d}_p(\bar{x}_n, \bar{x})\to 0\) as \(n\to \infty\). To conclude the proof, we need to show that \(\bar{d}_o(\bar{x}_n, \bar{x})\to 0\) as \(n\to \infty\). Fix \(\varepsilon >0\). Clearly, there exists \(N_0\in \N\) sufficiently large such that for all \(N\geq N_0\), 
\[
N-3 d(o, p)>0, \quad \frac{4 d(o, p)}{N-3 d(o, p)} \leq \varepsilon, \quad \text{ and } \quad \int_N^\infty 2t e^{-t} \, dt \leq \varepsilon.
\] 
We define \(N\coloneqq N_0+3 d(o, p)\) and put \(z\coloneqq \rho_{p \bar{x}}(N)\) and \(z_n\coloneqq \rho_{p \bar{x}_n}(N)\) for all \(n\geq N\). Since \(\bar{x}\in \partial_\gamma X\) and \(\bar{d}_p(\bar{x}_n, \bar{x})\to 0\) as \(n\to \infty\), there exists \(K \geq N\) sufficiently large such that for all \(n \geq K\), one has \(d(p, z_n)=N\) and \(d(z_n, z) \leq \varepsilon \). Now, \eqref{eq:technical-2} tells us that for all \(t\in [0, N_0]\),
\[
d(\rho_{oz_n}(t), \rho_{o \bar{x}_n}(t))\leq 4 t \cdot\frac{d(o, p)}{N- 3 d(o, p)} \leq t \varepsilon 
\]
for all \(n\geq K\), and the analogous estimate also holds for \(d(\rho_{o z}(t), \rho_{o \bar{x}}(t))\). As a result,
\begin{align*}
\bar{d}_o(\bar{x}_n, \bar{x})&\leq \int_0^{N_0} d(\rho_{o \bar{x}_n}(t), \rho_{o \bar{x}}(t)) e^{-t}\, dt+\int_{N_0}^\infty 2t e^{-t}\, dt \\
&\leq 2\varepsilon \int_0^{N_0} t e^{-t}\, dt+\int_0^{N_0} d(\rho_{o z_n}(t), \rho_{o z}(t)) e^{-t}\, dt +\varepsilon,
\end{align*}
which implies \(\bar{d}_o(\bar{x}_n, \bar{x}) \leq 2d(z_n, z)+3\varepsilon\). In particular, \(\bar{d}_o(\bar{x}_n, \bar{x}) \leq 5 \varepsilon\) for all \(n\geq K\). Since \(\varepsilon >0\) was arbitrary, we find that \(\bar{d}_o(\bar{x}_n, \bar{x})\to 0\) as \(n\to \infty\). This completes the proof. 
\end{proof}

\subsection{\(\mathcal{Z}\)-compactifications and proof of Theorem~\ref{thm:descombes-lang-gen}}

Let \(X\) be a proper metric space and \(\overline{X}\) a compactification of \(X\). We follow \cite{MR4007577} and say that \(\overline{X}\) is a \textit{\(\mathcal{Z}\)-compactification} of \(X\) if \(\overline{X}\setminus X\) is a \(\mathcal{Z}\)-set in \(\overline{X}\). We will need the following general fact about \(\mathcal{Z}\)-compactifications. 

\begin{Lem}\label{lem:general-topology}
Let \(X\) be a proper metric space. If \(X\) is an absolute retract, then any \(\mathcal{Z}\)-compactification of \(X\) is an absolute retract. 
\end{Lem}
Recall that a metrizable topological \(X\) space is an \textit{absolute retract} if whenever \(X\subset X^\prime\) is a closed subspace of a metrizable topological space \(X'\), then \(X\) is a retract of \(X'\), that is, there exists a continuous map \(r\colon X'\to X\) such that \(r(x)=x\) for all \(x\in X\). 
\begin{proof}[Proof of Lemma~\ref{lem:general-topology}]
This follows from a classical theorem of Hanner \cite[Theorem 7.2.]{MR43459}. We refer to the discussion surrounding Lemma~3.3 in \cite{MR4007577} for more information. 
We remark that in \cite{MR4007577} and \cite{MR43459} the authors work within the category of metrizable separable spaces. Thus, strictly speaking, it only follows that \(\mathcal{Z}\)-compactifications of \(X\) are absolute retracts in this category. However, it is well-known that any absolute retract in this category is also an absolute retract in the category of metrizable topological spaces. Indeed, this is a direct consequence of Tietze's extension theorem and the fact that every metrizable seperable space can be realized as a closed subset of \(\R^\infty\).   
\end{proof}

We conclude this section with the proofs of Theorem~\ref{thm:descombes-lang-gen} and Corollary~\ref{cor:ez-structure}.

\begin{proof}[Proof of Theorem~\ref{thm:descombes-lang-gen}]
Fix \(o\in X\) and let \(h\colon \overline{X}_\gamma \times [0,1]\to\overline{X}_\gamma\) be defined by \(h(\bar{x}, 0)=\bar{x}\) and \(h(\bar{x}, t)=\rho_{o \bar{x}}\bigl(\frac{1-t}{t}\bigr)\) for \(t\in (0,1]\). Clearly, \(h_t(\overline{X}_\gamma)\subset X\) whenever \(t\in (0,1]\) and using the metric \(\bar{d}_o\) it is easy to check that \(h\) is continuous. Hence, it follows that \(\partial_\gamma X\) is a \(\mathcal{Z}\)-set in \(\overline{X}_\gamma\). By invoking the Arzelà–Ascoli theorem, we find that \(\overline{X}_\gamma\) is compact and thus \(\overline{X}_\gamma\) is a \(\mathcal{Z}\)-compactification of \(X\). Now, since \(X\) admits a conical bicombing, it is strictly equiconnected, and thus by a classical result due to Himmelberg (see \cite[Theorem 4]{MR195038}), it follows that \(X\) is an absolute retract. Therefore, by Lemma~\ref{lem:general-topology} we obtain that \(\overline{X}_\gamma\) is an absolute retract as well.
\end{proof}

\begin{proof}[Proof of Corollary~\ref{cor:ez-structure}]
Let \(G\) denote a group which acts geometrically an a proper metric space \(X\) admitting a conical bicombing. Let \(\gamma\) be a consistent bicombing on \(X\) satisfying the properties stated in Theorem~\ref{thm:consi}. Fix \(o\in X\). Let \(\overline{X}_\gamma\) be constructed as in Section~\ref{sec:construction} and equip it with the topology induced by \(\bar{d}_o\). We claim that \((\overline{X}_\gamma, \partial_\gamma X)\) defines a \(\mathcal{Z}\)-structure of \(G\). Clearly, \(\overline{X}_\gamma\) is compact and \eqref{it:2} holds. By invoking Theorem~\ref{thm:descombes-lang-gen}, we obtain \eqref{it:1}. In the following, we show \eqref{it:3}. Let \(C\subset X\) be a compact subset and \(R_0>0\) a real number such that \(C\subset B_R\) for all \(R\geq R_0\), where \(B_R\subset X\) denotes the closed ball of radius \(R\) centered at \(o\). By the use of Lemma~\ref{lem:technical-1}, it follows that 
\[
\bar{d}_o( gx, gy)\leq \frac{2\diam C}{R}\int_{0}^R t e^{-t}\, dt+2\int_{R}^\infty t e^{-t}\, dt \quad \quad \quad (x,y\in C)
\]
for all \(R\geq R_0\) and all \(g\in G\) which satisfy \(g B_R\cap B_R=\varnothing\). Let \(\mathcal{U}\) be an open cover of \((\overline{X}_\gamma, \bar{d}_o)\) and fix a Lebesgue number \(\delta\in \bigl(0,\tfrac{1}{10}\bigr)\) of \(\mathcal{U}\).
Fix \(R>\max\bigl\{ R_0,\, \tfrac{4}{\delta}\cdot\diam C,\, \log\bigl(\tfrac{4}{\delta^{2}}\bigr)\bigr\}\). By the above, if \(g\in G\) satisfies \(g B_R\cap B_R=\varnothing\), then the diameter of \(gC\) with respect to \(\bar{d}_o\) is smaller than \(\delta\) and thus \(gC\) is contained in some member of \(\mathcal{U}\). Hence, \eqref{it:3} follows, as \(g B_R\cap B_R=\varnothing\) for all but finitely many \(g\in G\). This proves that \((\overline{X}_\gamma, \partial_\gamma X)\) is a \(\mathcal{Z}\)-structure of \(G\). 
Finally, if \(X\) is injective then \(\gamma\) is equivariant with respect to the isometry group of \(X\), and thus the action of \(G\) on \(X\) can be extended to \(\overline{X}_\gamma\). Notice that \(\bar{d}_o(g\bar{x}, g\bar{y})=\bar{d}_{go}(\bar{x}, \bar{y})\) for all \(\bar{x}\), \(\bar{y}\in \overline{X}_\gamma\).  
Hence, because of Lemma~\ref{lem:topology} it follows that \(G\) acts by homeomorphisms on \(\overline{X}_\gamma\), and so \((\overline{X}_\gamma, \partial_\gamma X)\) is an \(E\mathcal{Z}\)-structure of \(G\), as desired. 
\end{proof}

\printbibliography

\end{document}